\providecommand{\U}[1]{\protect\rule{.1in}{.1in}}
\newtheorem{theorem}{Theorem}
\newtheorem{corollary}[theorem]{Corollary}
\newtheorem{lemma}[theorem]{Lemma}
\newtheorem{proposition}[theorem]{Proposition}
\begin{document}
\title[A non-convex variational problem]
 {A non-convex variational problem appearing in a large deformation
elasticity problem}

\author[Voisei]{M. D. Voisei}

\address{%
Towson University\\
Department of Mathematics\\
Towson, Maryland, U.S.A.}

\email{mvoisei@towson.edu}.

\author[Z\u{a}linescu]{C. Z\u{a}linescu}
\address{University {}``Al.I.Cuza'' Ia\c{s}i\br Faculty of
Mathematics\br and\br Institute of Mathematics Octav Mayer\br Ia\c{s}i,
Romania}

\email{zalinesc@uaic.ro}

\subjclass{Primary 74B20; Secondary 74P99}

\keywords{Nonlinear elasticity, triality theory}

\date{April 7, 2010}

\maketitle
\begin{abstract}
A result concerning global extrema in a nonsmooth nonconvex variational
problem that appears in applications (e.g. in a large deformation
elasticity problem) is investigated in comparison with a result of
D.Y. Gao and R.W. Ogden. The tools used are elementary and the results
derived improve upon and correct a recent similar result, more precisely,
Theorem 4 of the paper {}``Closed-form solutions, extremality and
nonsmoothness criteria in a large deformation elasticity problem''
by the fore-mentioned authors.
\end{abstract}

\section{Introduction}

Starting in 1998 a new optimization theory called the {}``triality
theory'' has been the object of intense studies (see e.g.
\cite{Gaoo-book} and the references within) while numerous
applications of this theory have been published in prestigious
journals throughout the literature. This triality theory promises
fruitful results for a large class of optimization problems and is
based on a gap function also called the Gao--Strang complementary
gap function (see \cite{Gao/Strang:89}).

In this paper we investigate one such application of the triality
theory, namely a variational problem appearing in an elasticity problem
studied in \cite{Gao/Ogden:08}, we prove that its main result \cite[Th.\ 4]{Gao/Ogden:08}
is false and we correct that result using elementary arguments (see
Proposition \ref{prop2} below). The analysis of \cite[Th.\ 4]{Gao/Ogden:08}
is accompanied by several comments on the context, mathematical writing
manner, and arguments of \cite{Gao/Ogden:08} and by several counterexamples.

The plan of the paper is as follows. In section 2 we study a series
of polynomial and rational functions together with their critical
points, relative extrema, and behavior. Section 3 deals with the nonconvex
variational problem in focus. The main object of Section 4 is to perform
a comparison of the results in Section 3 with \cite[Th.\ 4]{Gao/Ogden:08}.
Section 5 presents our main conclusions.

\section{An elementary argument}

We begin with an elementary study of some simple functions. Throughout
this paper $\alpha,\mu,\nu>0$ and $\tau\in\mathbb{R}$; we consider
the polynomial \begin{equation}
p_{\tau}(y):=\tfrac{1}{2}\mu y^{2}+\tfrac{1}{2}\nu\left(\tfrac{1}{2}y^{2}-\alpha y\right)^{2}-(\tau+\alpha\mu)y\label{def-p}\end{equation}
 and the functions \begin{equation}
h_{\tau}:\mathbb{R}\setminus\{-\mu\}\rightarrow\mathbb{R},\quad h_{\tau}(\varsigma):=-\frac{1}{2}\left(\frac{\tau^{2}}{\varsigma+\mu}+2\alpha\tau+\alpha^{2}(\varsigma+\mu)+\nu^{-1}\varsigma^{2}\right)\label{def-h}\end{equation}
 for $\tau\in\mathbb{R}\setminus\{0\}$, and \begin{equation}
h_{0}:\mathbb{R}\rightarrow\mathbb{R},\quad h_{0}(\varsigma):=-\tfrac{1}{2}\left(\alpha^{2}(\varsigma+\mu)+\nu^{-1}\varsigma^{2}\right).\label{def-h0}\end{equation}

For $\tau\in\mathbb{R}$ we have \begin{equation}
h_{\tau}^{\prime}(\varsigma)=-\frac{1}{2}\left(-\frac{\tau^{2}}{(\varsigma+\mu)^{2}}+\alpha^{2}+2\nu^{-1}\varsigma\right)=-\frac{1}{2}\frac{f(\varsigma)-\tau^{2}}{(\varsigma+\mu)^{2}}\quad\forall\varsigma\in\mathbb{R}\setminus\{-\mu\},\label{der-h}\end{equation}
 where $f$ is the polynomial\[
f(\varsigma):=\left(2\nu^{-1}\varsigma+\alpha^{2}\right)(\mu+\varsigma)^{2}.\]
 For $\tau\in\mathbb{R}$, let us consider the function $\Xi_{\tau}:\mathbb{R}^{2}\rightarrow\mathbb{R}$
defined by \[
\Xi_{\tau}(u,\varsigma):=\tfrac{1}{2}u^{2}(\varsigma+\mu)-\alpha u\varsigma-\tfrac{1}{2}\nu^{-1}\varsigma^{2}-(\tau+\alpha\mu)u.\]

From the expression of $\Xi_{\tau}$ we see that, for every fixed
$\tau\in\mathbb{R}$, $\Xi_{\tau}(u,\cdot)$ is concave for every
$u\in\mathbb{R}$ and $\Xi_{\tau}(\cdot,\varsigma)$ is convex (concave)
for $\varsigma\geq-\mu$ $(\varsigma\leq-\mu).$

We have\begin{gather}
p_{\tau}^{\prime}(y)=\mu y+\nu\left(\tfrac{1}{2}y^{2}-\alpha y\right)(y-\alpha)-\tau-\alpha\mu,\nonumber \\
p_{\tau}^{\prime\prime}(y)=\mu+\nu\left(\tfrac{3}{2}y^{2}-3\alpha y+\alpha^{2}\right),\nonumber \\
\frac{\partial\Xi_{\tau}}{\partial u}(u,\varsigma)=u(\varsigma+\mu)-\alpha\varsigma-\tau-\alpha\mu,\nonumber \\
\frac{\partial\Xi_{\tau}}{\partial\varsigma}(u,\varsigma)=\tfrac{1}{2}u^{2}-\alpha u-\nu^{-1}\varsigma.\label{dXidvars}\end{gather}

Then $(u,\varsigma)$ is a critical point of $\Xi_{\tau}$ iff\begin{equation}
u(\varsigma+\mu)-\alpha\varsigma-\tau-\alpha\mu=0,\quad\tfrac{1}{2}u^{2}-\alpha u-\nu^{-1}\varsigma=0.\label{cp-Xi}\end{equation}

\begin{theorem} \label{Thm1}Let $p_{\tau}$, $h_{\tau}$, $\Xi_{\tau}$
and $f$ be defined as above.

\emph{(i)} If $(u,\varsigma)$ is a critical point of $\Xi_{\tau}$
then $\varsigma=\nu\left(\tfrac{1}{2}u^{2}-\alpha u\right)$, $p_{\tau}^{\prime}(u)=0$,
i.e. $u$ is a critical point of $p_{\tau}$, \begin{equation}
p_{\tau}^{\prime\prime}(u)=\mu+\nu(3\nu^{-1}\varsigma+\alpha^{2})=3(\varsigma-\rho),\label{d2pt-pc}\end{equation}
 where \begin{equation}
\rho:=-\tfrac{1}{3}(\mu+\nu\alpha^{2}),\label{rho}\end{equation}
 $\varsigma$ is a solution of the equation \begin{equation}
f(\varsigma)=\tau^{2}\label{ec-f}\end{equation}
 (in particular, when $\tau\ne0$, $\varsigma$ is a critical point
of $h_{\tau}$), and

\begin{equation}
p_{\tau}(u)=\Xi_{\tau}(u,\varsigma)=h_{\tau}(\varsigma).\label{r-dual}\end{equation}

\emph{(ii)} For every $u\in\mathbb{R}$, set \begin{equation}
\varsigma_{u}:=\nu\left(\tfrac{1}{2}u^{2}-\alpha u\right).\label{zetav}\end{equation}

Then $\varsigma_{u}$ is a global maximum point for $\Xi_{\tau}(u,\cdot)$
and\begin{equation}
p_{\tau}(u)=\Xi_{\tau}(u,\varsigma_{u})=\sup_{\varsigma\in\mathbb{R}}\Xi_{\tau}(u,\varsigma)\quad\forall u\in\mathbb{R}.\label{u-gen}\end{equation}

\emph{(iii)} If $p_{\tau}^{\prime}(u)=0$ then $(u,\varsigma_{u})$
is a critical point of $\Xi_{\tau}$, $\varsigma_{u}$ is a solution
of (\ref{ec-f}), and (\ref{r-dual}) holds for $\varsigma=\varsigma_{u}.$

\emph{(iv)} For $\varsigma\neq-\mu$ set \begin{equation}
u_{\varsigma}:=\alpha+\tau/\left(\varsigma+\mu\right).\label{vzeta}\end{equation}
 Then\begin{equation}
h_{\tau}(\varsigma)=\Xi_{\tau}(u_{\varsigma},\varsigma)=\left\{ \begin{array}{ccc}
\inf_{u\in\mathbb{R}}\Xi_{\tau}(u,\varsigma) & \text{if} & \varsigma>-\mu,\\
\sup_{u\in\mathbb{R}}\Xi_{\tau}(u,\varsigma) & \text{if} & \varsigma<-\mu,\end{array}\right.\label{pds}\end{equation}
 that is, if $\varsigma+\mu>0$ $(\varsigma+\mu<0)$ then $u_{\varsigma}$
is a global minimum (maximum) point of $\Xi_{\tau}(\cdot,\varsigma)$.

\emph{(v)} If $\varsigma\neq-\mu$ is a solution of (\ref{ec-f})
(or equivalently, $\varsigma$ is a critical point of $h_{\tau}$)
then $(u_{\varsigma},\varsigma)$ is a critical point of $\Xi_{\tau},$
$u_{\varsigma}$ is a critical point of $p_{\tau}$ and (\ref{r-dual})
holds for $u=u_{\varsigma}$.

\emph{(vi)} If $(\overline{u},\overline{\varsigma})\in\mathbb{R}^{2}$
is a critical point of $\Xi_{\tau}$ with $\overline{\varsigma}>-\mu$
then $\overline{\varsigma}=\varsigma_{\overline{u}}$, $\overline{u}=u_{\overline{\varsigma}}$,
\begin{equation}
\sup_{\varsigma\in\mathbb{R}}\inf_{u\in\mathbb{R}}\Xi_{\tau}(u,\varsigma)=\inf_{u\in\mathbb{R}}\Xi_{\tau}(u,\overline{\varsigma})=\Xi_{\tau}(\overline{u},\overline{\varsigma})=p_{\tau}(\overline{u})=\inf_{u\in\mathbb{R}}p_{\tau}(u)\label{minmax}\end{equation}
 and\begin{equation}
\sup_{\varsigma>-\mu}h_{\tau}(\varsigma)=h_{\tau}(\overline{\varsigma})=\Xi_{\tau}(\overline{u},\overline{\varsigma})=p_{\tau}(\overline{u})=\inf_{u\in\mathbb{R}}p_{\tau}(u).\label{supinf}\end{equation}
 In particular, $\overline{u}$ is a global minimum of $p_{\tau}$
on $\mathbb{R}$.

\end{theorem}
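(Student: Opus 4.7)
The plan is to exploit the fact that on the half-plane $\{\varsigma>-\mu\}$ the function $\Xi_{\tau}$ is convex in $u$ and concave in $\varsigma$, so a critical point $(\overline{u},\overline{\varsigma})$ with $\overline{\varsigma}>-\mu$ is a genuine saddle point. Combined with parts (ii) and (iv), this will deliver all the asserted equalities. First I would unpack (\ref{cp-Xi}) at $(\overline{u},\overline{\varsigma})$: the equation coming from $\partial_{\varsigma}\Xi_{\tau}=0$ is exactly $\overline{\varsigma}=\nu(\tfrac{1}{2}\overline{u}^{2}-\alpha\overline{u})=\varsigma_{\overline{u}}$ in the notation (\ref{zetav}); dividing the $\partial_{u}\Xi_{\tau}=0$ equation by $\overline{\varsigma}+\mu>0$ gives $\overline{u}=\alpha+\tau/(\overline{\varsigma}+\mu)=u_{\overline{\varsigma}}$ as in (\ref{vzeta}).

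Second, I would read off the three middle equalities in (\ref{minmax}) from parts (ii) and (iv). By (ii) with $u=\overline{u}$, and because $\overline{\varsigma}=\varsigma_{\overline{u}}$, one gets $p_{\tau}(\overline{u})=\Xi_{\tau}(\overline{u},\overline{\varsigma})$. By (iv) with $\varsigma=\overline{\varsigma}>-\mu$, and because $\overline{u}=u_{\overline{\varsigma}}$, one gets $\inf_{u\in\mathbb{R}}\Xi_{\tau}(u,\overline{\varsigma})=\Xi_{\tau}(\overline{u},\overline{\varsigma})=h_{\tau}(\overline{\varsigma})$. The outer equalities then come from a standard minimax squeeze. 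On one hand, the identity $p_{\tau}(u)=\sup_{\varsigma}\Xi_{\tau}(u,\varsigma)$ from (ii) implies $\inf_{u}p_{\tau}(u)\geq\sup_{\varsigma}\inf_{u}\Xi_{\tau}(u,\varsigma)$; on the other, $\sup_{\varsigma}\inf_{u}\Xi_{\tau}(u,\varsigma)\geq\inf_{u}\Xi_{\tau}(u,\overline{\varsigma})=p_{\tau}(\overline{u})\geq\inf_{u}p_{\tau}(u)$. The chain closes on itself and forces the five quantities in (\ref{minmax}) to coincide, so in particular $\overline{u}$ minimizes $p_{\tau}$ globally.

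Finally, (\ref{supinf}) drops out with almost no extra work. By (iv), $h_{\tau}(\varsigma)=\inf_{u}\Xi_{\tau}(u,\varsigma)$ for every $\varsigma>-\mu$, hence $\sup_{\varsigma>-\mu}h_{\tau}(\varsigma)\leq\sup_{\varsigma\in\mathbb{R}}\inf_{u}\Xi_{\tau}(u,\varsigma)=p_{\tau}(\overline{u})=h_{\tau}(\overline{\varsigma})$ by (\ref{minmax}) and the middle equalities above, while the reverse inequality is immediate from $\overline{\varsigma}>-\mu$. The only delicate point is the hypothesis $\overline{\varsigma}>-\mu$ itself, which is used in two essential places: it guarantees that $u_{\overline{\varsigma}}$ is well defined via (\ref{vzeta}), and it ensures that $\Xi_{\tau}(\cdot,\overline{\varsigma})$ is convex so that the critical point $\overline{u}$ is genuinely a global minimum. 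No serious obstacle is anticipated — the argument is a careful assembly of the earlier parts rather than a new computation.
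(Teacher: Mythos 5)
Your treatment of part (vi) is correct and is essentially the paper's own argument: you identify $(\overline{u},\overline{\varsigma})$ as a saddle point using the convexity of $\Xi_{\tau}(\cdot,\overline{\varsigma})$ for $\overline{\varsigma}>-\mu$ and the concavity of $\Xi_{\tau}(\overline{u},\cdot)$, extract $\overline{\varsigma}=\varsigma_{\overline{u}}$ and $\overline{u}=u_{\overline{\varsigma}}$ from (\ref{cp-Xi}), and close the chain with the weak minimax inequality $\sup_{\varsigma}\inf_{u}\Xi_{\tau}\leq\inf_{u}\sup_{\varsigma}\Xi_{\tau}$ together with $p_{\tau}(u)=\sup_{\varsigma}\Xi_{\tau}(u,\varsigma)$ and $h_{\tau}(\varsigma)=\inf_{u}\Xi_{\tau}(u,\varsigma)$ for $\varsigma>-\mu$. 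Both (\ref{minmax}) and (\ref{supinf}) come out correctly this way.

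The genuine gap is one of coverage: the statement is the entire six-part theorem, and you prove only (vi), explicitly invoking (ii) and (iv) as already established. Parts (i)--(v) are not mere bookkeeping and still need arguments. For (ii) and (iv) one must check that $\varsigma_{u}$ (resp.\ $u_{\varsigma}$) is a critical point of the concave function $\Xi_{\tau}(u,\cdot)$ (resp.\ of the convex/concave function $\Xi_{\tau}(\cdot,\varsigma)$ according to the sign of $\varsigma+\mu$) and verify the substitution identities $p_{\tau}(u)=\Xi_{\tau}(u,\varsigma_{u})$ and $h_{\tau}(\varsigma)=\Xi_{\tau}(u_{\varsigma},\varsigma)$, which are short but real computations. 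Part (i) additionally requires deriving $p_{\tau}'(u)=0$, the formula (\ref{d2pt-pc}), and the fact that $\varsigma$ solves (\ref{ec-f}) --- the paper gets the latter by combining $2\nu^{-1}\varsigma+\alpha^{2}=(u-\alpha)^{2}$ with $(u-\alpha)(\varsigma+\mu)=\tau$, so that $f(\varsigma)=(u-\alpha)^{2}(\varsigma+\mu)^{2}=\tau^{2}$ --- as well as the three-way equality (\ref{r-dual}), where the degenerate case $\varsigma=-\mu$ (which forces $\tau=0$) must be handled separately. Parts (iii) and (v) are the converse directions, needed elsewhere in the paper (notably in Proposition \ref{prop1}), and are absent from your proposal altogether.
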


\begin{proof} (i) Let $(u,\varsigma)$ be a critical point of $\Xi_{\tau}$.
Relation $\varsigma=\nu\left(\tfrac{1}{2}u^{2}-\alpha u\right)$ follows
directly from the second part in (\ref{cp-Xi}). This yields $p_{\tau}^{\prime}(u)=\mu u+\varsigma(y-\alpha)-\tau-\alpha\mu=0$
by the first part in (\ref{cp-Xi}) and $p_{\tau}^{\prime\prime}(u)=\mu+\nu(3\nu^{-1}\varsigma+\alpha^{2})=3(\varsigma-\rho)$
where $\rho=-\tfrac{1}{3}(\mu+\nu\alpha^{2})$. Again, $\varsigma=\nu\left(\tfrac{1}{2}u^{2}-\alpha u\right)$
becomes $2\nu^{-1}\varsigma+\alpha^{2}=(u-\alpha)^{2}$ while the
first equality in (\ref{cp-Xi}) is equivalent to $(u-\alpha)(\varsigma+\mu)=\tau$.
These easily provide $f(\varsigma)=(2\nu^{-1}\varsigma+\alpha^{2})(\varsigma+\mu)^{2}=\tau^{2}$.
When $\tau\ne0$ we clearly have that $\varsigma\not=-\mu$. Hence,
according to (\ref{der-h}) and (\ref{ec-f}), $\varsigma$ is a critical
point of $h_{\tau}$.

A direct computation based on the relation $\varsigma=\nu\left(\tfrac{1}{2}u^{2}-\alpha u\right)$
provides the equality $p_{\tau}(u)=\Xi_{\tau}(u,\varsigma)$. For
$\Xi_{\tau}(u,\varsigma)=h_{\tau}(\varsigma)$ first notice again
that $\tau=0$ whenever $\varsigma=-\mu$ and $\Xi_{0}(u,-\mu)=-\tfrac{1}{2}\nu^{-1}\mu^{2}=h_{0}(-\mu)$.
It remains to prove the last equality in (\ref{r-dual}) for $\varsigma\ne-\mu$
(which happens, in particular, when $\tau\ne0$). In this case $u=\alpha+\frac{\tau}{\varsigma+\mu}$,
$\frac{\tau^{2}}{\varsigma+\mu}=\tau(u-\alpha)$ and the equality
$\Xi_{\tau}(u,\varsigma)=h_{\tau}(\varsigma)$ reduces to $(u-\alpha)^{2}(\varsigma+\mu)=\tau(u-\alpha)$
which is clearly true.

(ii) It is easily seen from (\ref{dXidvars}) and (\ref{zetav}) that,
for every $u\in\mathbb{R}$, $\varsigma_{u}$ is a critical point
for the concave function $\Xi_{\tau}(u,\cdot)$; hence $\varsigma_{u}$
is a global maximum point of $\Xi_{\tau}(u,\cdot)$. This fact is
reflected by the second equality in (\ref{u-gen}). The first equality
in (\ref{u-gen}) follows directly from (\ref{zetav}).

(iii) Since $p_{\tau}'(u)=\mu u+\varsigma_{u}(u-\alpha)-\tau-\alpha\mu=0$
one sees that $(u,\varsigma_{u})$ satisfies (\ref{cp-Xi}). The second
part follows from (i).

(iv) For (\ref{pds}) one uses again the fact that a critical point
for a convex (concave) functional is a global minimum (maximum) point
for that functional; apply this for $u_{\varsigma}$ which is a critical
point of $\Xi(\cdot,\varsigma)$.

(v) While the first part in (\ref{cp-Xi}) is straightforward due
to (\ref{vzeta}) the second part part in (\ref{cp-Xi}) follows from
$f(\varsigma)=\tau^{2}$ coupled again with $\tau/(\varsigma+\mu)=u_{\varsigma}-\alpha$.
Therefore $(u_{\varsigma},\varsigma)$ is a critical point of $\Xi_{\tau}$,
and from (i) we have that $u_{\varsigma}$ is a critical point of
$p_{\tau}$ and (\ref{r-dual}) holds for $\varsigma$ and $u=u_{\varsigma}$.

(vi) Let $(\overline{u},\overline{\varsigma})\in\mathbb{R}^{2}$ be
a critical point of $\Xi_{\tau}$ with $\overline{\varsigma}>-\mu$.
Relations $\overline{\varsigma}=\varsigma_{\overline{u}}$ and $\overline{u}=u_{\overline{\varsigma}}$
are consequences of (i). Since $\overline{u}$ is a critical point
for the convex function $\Xi_{\tau}(\cdot,\overline{\varsigma})$
we know that $\overline{u}$ is a global minimum point of $\Xi_{\tau}(\cdot,\overline{\varsigma})$.
Similarly, $\overline{\varsigma}$ is a critical point for the concave
function of $\Xi_{\tau}(\overline{u},\cdot)$ thus $\overline{\varsigma}$
is a global maximum point of $\Xi_{\tau}(\overline{u},\cdot)$. These
facts translate as

\[
\Xi_{\tau}(u,\overline{\varsigma})\geq\Xi_{\tau}(\overline{u},\overline{\varsigma})\geq\Xi_{\tau}(\overline{u},\varsigma)\quad\forall u\in\mathbb{R},forall\varsigma\in\mathbb{R}.\]
 It follows that \[
\sup_{\varsigma\in\mathbb{R}}\inf_{u\in\mathbb{R}}\Xi_{\tau}(u,\varsigma)\geq\inf_{u\in\mathbb{R}}\Xi_{\tau}(u,\overline{\varsigma})=\Xi_{\tau}(\overline{u},\overline{\varsigma})=\sup_{\varsigma\in\mathbb{R}}\Xi_{\tau}(\overline{u},\varsigma)\geq\inf_{u\in\mathbb{R}}\sup_{\varsigma\in\mathbb{R}}\Xi_{\tau}(u,\varsigma).\]
 Since $\sup_{\varsigma\in\mathbb{R}}\inf_{u\in\mathbb{R}}\Xi_{\tau}(u,\varsigma)\leq\inf_{u\in\mathbb{R}}\sup_{\varsigma\in\mathbb{R}}\Xi_{\tau}(u,\varsigma)$
(for every function $\Xi_{\tau}$), we obtain that\[
\sup_{\varsigma\in\mathbb{R}}\inf_{u\in\mathbb{R}}\Xi_{\tau}(u,\varsigma)=\inf_{u\in\mathbb{R}}\Xi_{\tau}(u,\overline{\varsigma})=\Xi_{\tau}(\overline{u},\overline{\varsigma})=p_{\tau}(\overline{u})=\inf_{u\in\mathbb{R}}p_{\tau}(u);\]
 the last equality being due to (\ref{u-gen}). Taking into account
(\ref{pds}), (\ref{minmax}), and (v) we get \[
\sup_{\varsigma>-\mu}h_{\tau}(\varsigma)=h_{\tau}(\overline{\varsigma})=\Xi_{\tau}(\overline{u},\overline{\varsigma})=p_{\tau}(\overline{u})=\inf_{u\in\mathbb{R}}p_{\tau}(u),\]
 that is, (\ref{supinf}) holds. \end{proof}

\medskip{}

The following result analyzes the solutions of (\ref{ec-f}).

\begin{proposition} \label{prop1}Assume that $2\mu<\nu\alpha^{2}$
and set \begin{equation}
\eta:=(\nu\alpha^{2}-2\mu)^{3}/27\nu.\label{eta}\end{equation}

\emph{(a)} If $\tau^{2}>\eta$ then equation (\ref{ec-f}) has a unique
real solution $\overline{\varsigma};$ moreover, $\overline{\varsigma}>-\mu$
and for $\overline{u}:=u_{\overline{\varsigma}}$ one has (\ref{r-v1})
below with $\overline{u}$ and $\overline{\varsigma}$ instead of
$\overline{u}_{1}$ and $\overline{\varsigma}_{1}$, respectively.

\emph{(b)} If $\tau^{2}=\eta$ then equation (\ref{ec-f}) has the
real solutions $\overline{\varsigma}_{1}>-\mu$ and $\overline{\varsigma}_{2}=\overline{\varsigma}_{3}=\rho$,
where $\rho$ is defined in (\ref{rho}). Moreover, for $\overline{u}_{1}:=u_{\overline{\varsigma}_{1}}$
one has \begin{equation}
p_{\tau}(u)>p_{\tau}(\overline{u}_{1})=h_{\tau}(\overline{\varsigma}_{1})>h_{\tau}(\varsigma)\quad\forall u\in\mathbb{R}\setminus\{\overline{u}_{1}\},forall\varsigma\in(-\mu,\infty)\setminus\{\overline{\varsigma}_{1}\}.\label{r-v1}\end{equation}

\emph{(c)} If $0<\tau^{2}<\eta$ then equation (\ref{ec-f}) has the
real solutions $\overline{\varsigma}_{1},\overline{\varsigma}_{2},\overline{\varsigma}_{3}$
with \[
\overline{\varsigma}_{1}>-\mu>\overline{\varsigma}_{2}>\rho>\overline{\varsigma}_{3}>-\tfrac{1}{2}\nu\alpha^{2}.\]
 Moreover, for $\overline{u}_{i}:=u_{\overline{\varsigma}_{i}}$ $(i\in\{1,2,3\}),$
one has (\ref{r-v1}) and for $i\in\{2,3\}$ there exists a neighborhood
$U_{i}$ of $\overline{u}_{i}$ such that and \begin{equation}
\min_{u\in U_{2}}p_{\tau}(u)=p_{\tau}(\overline{u}_{2})=h_{\tau}(\overline{\varsigma}_{2})=\min_{\varsigma\in(\overline{\varsigma}_{3},-\mu)}h_{\tau}(\varsigma)\label{z2v2}\end{equation}
 and\begin{equation}
\max_{u\in U_{3}}p_{\tau}(u)=p_{\tau}(\overline{u}_{3})=h_{\tau}(\overline{\varsigma}_{3})=\max_{\varsigma\in(-\infty,\overline{\varsigma}_{2})}h_{\tau}(\varsigma).\label{z3v3}\end{equation}

\emph{(d)} If $\tau=0$ then equation (\ref{ec-f}) has the real solutions
$\overline{\varsigma}_{1}=\overline{\varsigma}_{2}=-\mu$ and $\overline{\varsigma}_{3}=-\tfrac{1}{2}\nu\alpha^{2}$.
Setting $\overline{u}_{1}=\alpha+\sqrt{\alpha^{2}-2\nu^{-1}\mu},$
$\overline{u}_{2}=\alpha-\sqrt{\alpha^{2}-2\nu^{-1}\mu}$ and $\overline{u}_{3}:=u_{\overline{\varsigma}_{3}}=\alpha$,
we have that \begin{equation}
\min_{u\in\mathbb{R}}p_{0}(u)=p_{0}(\overline{u}_{1})=p_{0}(\overline{u}_{2})=\sup_{\varsigma>-\mu}h_{0}(\varsigma)=\inf_{\mu-\nu\alpha^{2}<\varsigma<-\mu}h_{0}(\varsigma)=-\tfrac{1}{2}\nu^{-1}\mu^{2},\label{tau0}\end{equation}
 and (\ref{z3v3}) holds with $U_{3}:=(\overline{u}_{2},\overline{u}_{1})$.
\end{proposition}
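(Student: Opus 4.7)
The plan is to study the cubic $f$ in order to determine the number and location of real solutions of $f(\varsigma)=\tau^{2}$ in each regime (a)--(d), and then to read off all extremality claims from Theorem \ref{Thm1} together with elementary calculus on $f$ and $h_{\tau}$. Concretely, I would first compute $f'(\varsigma)=6\nu^{-1}(\varsigma+\mu)(\varsigma-\rho)$ with $\rho=-\tfrac{1}{3}(\mu+\nu\alpha^{2})$. The hypothesis $2\mu<\nu\alpha^{2}$ forces $-\tfrac{1}{2}\nu\alpha^{2}<\rho<-\mu$, so $f$ is strictly increasing on $(-\infty,\rho]$, strictly decreasing on $[\rho,-\mu]$, and strictly increasing on $[-\mu,\infty)$; direct substitution gives $f(\rho)=\eta$ while $f(-\mu)=f(-\tfrac{1}{2}\nu\alpha^{2})=0$. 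Intersecting the graph of $f$ with the horizontal line of height $\tau^{2}$ now delivers exactly the root count and location intervals announced in (a)--(d).

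Each root $\overline{\varsigma}_{i}\ne-\mu$ produces via $\overline{u}_{i}:=u_{\overline{\varsigma}_{i}}$ a critical point $(\overline{u}_{i},\overline{\varsigma}_{i})$ of $\Xi_{\tau}$ (Theorem \ref{Thm1}(v)), with the identity $p_{\tau}(\overline{u}_{i})=h_{\tau}(\overline{\varsigma}_{i})$ from (\ref{r-dual}) and the second-derivative formula $p_{\tau}''(\overline{u}_{i})=3(\overline{\varsigma}_{i}-\rho)$ from (\ref{d2pt-pc}); conversely, Theorem \ref{Thm1}(iii) forces every critical point of $p_{\tau}$ to be of this form. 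In (a), (b), (c), the root $\overline{\varsigma}_{1}>-\mu>\rho$ satisfies the hypothesis of Theorem \ref{Thm1}(vi), which delivers the global $\inf$/$\sup$ duality at $(\overline{u}_{1},\overline{\varsigma}_{1})$. Strictness in (\ref{r-v1}) on the $h_{\tau}$ side is immediate: sign analysis of $h_{\tau}'(\varsigma)=-[f(\varsigma)-\tau^{2}]/[2(\varsigma+\mu)^{2}]$ combined with $f$ strictly increasing on $(-\mu,\infty)$ makes $\overline{\varsigma}_{1}$ the unique critical point, hence the strict global max of $h_{\tau}$, on that interval. On the $p_{\tau}$ side, in (a) $\overline{u}_{1}$ is the only critical point of the coercive quartic; in (b) the only other critical point $u_{\rho}$ is a horizontal inflection, as one checks via $p_{\tau}'''(u_{\rho})=3\nu\tau/(\rho+\mu)\ne 0$; in (c) one must additionally rule out the equality $p_{\tau}(\overline{u}_{1})=p_{\tau}(\overline{u}_{2})$, which otherwise would force $p_{\tau}-p_{\tau}(\overline{u}_{1})=\tfrac{\nu}{8}(u-\overline{u}_{1})^{2}(u-\overline{u}_{2})^{2}$, hence $\overline{u}_{3}=\tfrac{1}{2}(\overline{u}_{1}+\overline{u}_{2})$, which combined with $\overline{u}_{1}+\overline{u}_{2}+\overline{u}_{3}=3\alpha$ (sum of roots of the cubic $p_{\tau}'$) yields $\overline{u}_{3}=\alpha$, whence $\varsigma_{\overline{u}_{3}}=-\tfrac{1}{2}\nu\alpha^{2}$ and $\tau^{2}=f(-\tfrac{1}{2}\nu\alpha^{2})=0$, contradicting $\tau\ne 0$.

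For the local statements in (c) at $\overline{u}_{2},\overline{u}_{3}$, the signs $p_{\tau}''(\overline{u}_{2})=3(\overline{\varsigma}_{2}-\rho)>0$ and $p_{\tau}''(\overline{u}_{3})=3(\overline{\varsigma}_{3}-\rho)<0$ make $\overline{u}_{2}$ a strict local minimum and $\overline{u}_{3}$ a strict local maximum of $p_{\tau}$, yielding the neighborhoods $U_{2},U_{3}$. On $(-\infty,-\mu)$, the same $h_{\tau}'$ sign analysis combined with $f$ increasing on $(-\infty,\rho)$ and decreasing on $(\rho,-\mu)$ shows that $h_{\tau}$ is increasing on $(-\infty,\overline{\varsigma}_{3})$, decreasing on $(\overline{\varsigma}_{3},\overline{\varsigma}_{2})$, and increasing on $(\overline{\varsigma}_{2},-\mu)$; together with the limits $h_{\tau}(\varsigma)\to-\infty$ as $\varsigma\to-\infty$ and $h_{\tau}(\varsigma)\to+\infty$ as $\varsigma\to-\mu^{-}$, this produces (\ref{z2v2}) and (\ref{z3v3}).

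Finally, for (d) I would treat $\tau=0$ directly: factoring $p_{0}'(u)=(u-\alpha)[\mu+\tfrac{\nu}{2}u(u-2\alpha)]$ produces the three stated critical points, and direct substitution gives $p_{0}(\overline{u}_{1})=p_{0}(\overline{u}_{2})=-\tfrac{1}{2}\nu^{-1}\mu^{2}$ together with $h_{0}(-\mu)=h_{0}(\mu-\nu\alpha^{2})=-\tfrac{1}{2}\nu^{-1}\mu^{2}$. Since $h_{0}$ is a concave parabola maximized at $-\tfrac{1}{2}\nu\alpha^{2}\in(\mu-\nu\alpha^{2},-\mu)$ and strictly decreasing on $(-\mu,\infty)$, both $\sup_{\varsigma>-\mu}h_{0}$ and $\inf_{\varsigma\in(\mu-\nu\alpha^{2},-\mu)}h_{0}$ equal the common endpoint value $-\tfrac{1}{2}\nu^{-1}\mu^{2}$, matching the global minimum of $p_{0}$; the claim about $U_{3}=(\overline{u}_{2},\overline{u}_{1})$ follows because $p_{0}''(\alpha)=\mu-\tfrac{1}{2}\nu\alpha^{2}<0$ and $\alpha$ is the only critical point of $p_{0}$ in that open interval. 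The main obstacle I expect is the careful bookkeeping of the orderings of $\rho$, $-\mu$, $-\tfrac{1}{2}\nu\alpha^{2}$, and especially the strict global minimality in case (c), where \emph{a priori} the two strict local minima of the quartic $p_{\tau}$ could coincide in value.
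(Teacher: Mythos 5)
Your proposal is correct and follows essentially the same route as the paper: the monotonicity analysis of the cubic $f$ via $f'(\varsigma)=6\nu^{-1}(\varsigma+\mu)(\varsigma-\rho)$ to locate the roots, Theorem \ref{Thm1}(v),(vi) for the duality identities and the global statement at $\overline{\varsigma}_{1}>-\mu$, the sign of $p_{\tau}''(\overline{u}_{i})=3(\overline{\varsigma}_{i}-\rho)$ for the local extrema, the sign table of $h_{\tau}'$ for (\ref{z2v2})--(\ref{z3v3}), and the nonnegative-quartic double-root factorization to rule out a second global minimizer. The only (harmless) deviation is that the paper applies the factorization argument uniformly for all $\tau\neq0$, whereas you replace it in cases (a) and (b) by a count of critical points together with the observation that $u_{\rho}$ is a horizontal inflection; both variants are sound.
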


\begin{proof}We have and $f'(\varsigma)=6\nu^{-1}(\varsigma+\mu)(\varsigma-\rho)$.
Also, $-\tfrac{1}{2}\nu\alpha^{2}<\rho<-\mu$ since $2\mu<\nu\alpha^{2}$
and $f(\rho)=\eta$. The behavior of $f$ is shown in Table \ref{tab1}.

\begin{table}[h]
 \center \begin{tabular}{c|ccccccccc}
$\varsigma$  & $-\infty$  &  & $-\tfrac{1}{2}\nu\alpha^{2}$  &  & $\rho$  &  & $-\mu$  &  & $+\infty$\tabularnewline
\hline
$f^{\prime}(\varsigma)$  &  & $+$  & $+$  & $+$  & $0$  & $-$  & $0$  & $+$  & \tabularnewline
\hline
$f(\varsigma)$  & $-\infty$  & $\nearrow$  & $0$  & $\nearrow$  & $\eta$  & $\searrow$  & $0$  & $\nearrow$  & $+\infty$\tabularnewline
\end{tabular}

\caption{The behavior of $f$.}

\label{tab1}
\end{table}

Consider the polynomial equation $f(\varsigma)=\tau^{2};$ it has
a unique real solution $\varsigma_{1}>-\mu$ for $\tau^{2}>\eta$,
three real solutions $\varsigma_{1}>-\mu$ and $\varsigma_{2}=\varsigma_{3}=\rho$
for $\tau^{2}=\eta$, three real solutions $\varsigma_{1}>-\mu>\varsigma_{2}>\rho>\varsigma_{3}>-\tfrac{1}{2}\nu\alpha^{2}$
for $0<\tau^{2}<\eta$, and three real solutions $\varsigma_{1}=\varsigma_{2}=-\mu$
and $\varsigma_{3}=-\tfrac{1}{2}\nu\alpha^{2}$ for $\tau^{2}=0$
(see Figure \ref{tab-extra} below).

\begin{figure}[h]
\center\includegraphics[scale=0.7]{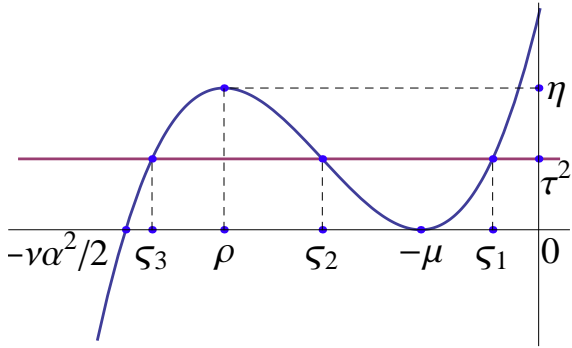}

\caption{The graph of $f$ and solutions of $f(\varsigma)=\tau^{2}$.}

\label{tab-extra}
\end{figure}

Based on the derivative of $h_{\tau}$ given in (\ref{der-h}), the
behavior of $h_{\tau}$ is presented in Table \ref{tab2} for $\tau^{2}>\eta$
and in Table \ref{tab3} for $0<\tau^{2}\leq\eta$.

\begin{table}[h]
 \center\begin{tabular}{c|ccccccc}
$\varsigma$  & $-\infty$  &  & $-\mu$  &  & $\varsigma_{1}$  &  & $+\infty$\tabularnewline
\hline
$h_{\tau}^{\prime}(\varsigma)$  &  & $+$  & $|$  & $+$  & $0$  & $-$  & $0$\tabularnewline
\hline
$h_{\tau}(\varsigma)$  & $-\infty$  & $\nearrow$  & $^{+\infty}|_{-\infty}$  & $\nearrow$  & $h_{\tau}(\varsigma_{1})$  & $\searrow$  & $-\infty$\tabularnewline
\end{tabular}

\caption{The behavior of $h$ for $\tau^{2}>\eta$.}

\label{tab2}
\end{table}

\begin{table}[h]
 \center\begin{tabular}{cccccccccccc}
$\varsigma$  & $-\infty$  &  & $\varsigma_{3}$  &  & $\varsigma_{2}$  &  & $-\mu$  &  & $\varsigma_{1}$  &  & $+\infty$\tabularnewline
\hline
$h_{\tau}^{\prime}(\varsigma)$  & $+$  & $+$  & $0$  & $-$  & $0$  & $+$  & $|$  & $+$  & $0$  & $-$  & \tabularnewline
\hline
$h_{\tau}(\varsigma)$  & $-\infty$  & $\nearrow$  & $h_{\tau}(\varsigma_{3})$  & $\searrow$  & $h_{\tau}(\varsigma_{2})$  & $\nearrow$  & $^{+\infty}|_{-\infty}$  & $\nearrow$  & $h_{\tau}(\varsigma_{1})$  & $\searrow$  & $-\infty$\tabularnewline
\end{tabular}

\caption{The behavior of $h$ for $0<\tau^{2}\leq\eta$.}

\label{tab3}
\end{table}

Assume that $\tau\neq0$. From the discussion above we have that the
equation $f(\varsigma)=\tau^{2}$ has a unique solution $\overline{\varsigma}_{1}:=\varsigma_{1}$
on the interval $(-\mu,\infty)$. Since $\overline{\varsigma}_{1}>-\mu$,
from Theorem \ref{Thm1} (v), (vi) we know that $(\overline{u}_{1},\overline{\varsigma}_{1})$
with $\overline{u}_{1}:=u_{\overline{\varsigma}_{1}}$ is a critical
point of $\Xi_{\tau}$ and \[
\inf_{u\in\mathbb{R}}p_{\tau}(u)=p_{\tau}(\overline{u}_{1})=h_{\tau}(\overline{\varsigma}_{1})=\sup_{\varsigma>-\mu}h_{\tau}(\varsigma).\]
 The fact that $h_{\tau}(\overline{\varsigma}_{1})>h_{\tau}(\varsigma)$
for $\varsigma\in(-\mu,\infty)\setminus\{\overline{\varsigma}_{1}\}$
is clear from Tables \ref{tab2}, \ref{tab3}. In order to complete
(\ref{r-v1}) we have to prove that $\overline{u}_{1}$ is the only
(strict) global minimum point for $p_{\tau}$, i.e., $p_{\tau}(u)>p_{\tau}(\overline{u}_{1})$
for $u\in\mathbb{R}\setminus\{\overline{u}_{1}\}.$

Assume that there exists $\overline{u}_{0}\in\mathbb{R}\setminus\{\overline{u}_{1}\}$
such that $p_{\tau}(\overline{u}_{0})=p_{\tau}(\overline{u}_{1})$.
Hence $\overline{u}_{0}$ is a global minimum point of $p_{\tau}$.
The polynomial $q(y)=p_{\tau}(y)-p_{\tau}(\overline{u}_{0})$ has
degree 4, is non-negative, and admits the distinct roots $\overline{u}_{0},\overline{u}_{1}$.
Hence $\overline{u}_{0},\overline{u}_{1}$ are double roots and so
$q(y)=\tfrac{1}{8}\nu(y-\overline{u}_{0})^{2}(y-\overline{u}_{1})^{2}$
after one takes into account the leading coefficient of $p_{\tau}$.
It follows that $p_{\tau}(y)=p_{\tau}(\overline{u}_{0})+\tfrac{1}{8}\nu(y-\overline{u}_{0})^{2}(y-\overline{u}_{1})^{2}$.
In expanded form, \[
p_{\tau}(y)=\tfrac{1}{8}\nu y^{4}-\tfrac{1}{2}\nu\alpha y^{3}+(\tfrac{1}{2}\nu\alpha^{2}+\tfrac{1}{2}\mu)y^{2}-(\tau+\alpha\mu)y+p_{\tau}(0).\]
 After we identify the coefficients of $y^{3},y^{2}$ and $y$ we
find $\overline{u}_{0}+\overline{u}_{1}=2\alpha$, $\overline{u}_{0}\overline{u}_{1}=2\mu\nu^{-1}$,
and $\tau=0$. Since by our running hypothesis $\tau\neq0$, we obtain
$p_{\tau}(u)>p_{\tau}(\overline{u}_{0})$ for all $\mathbb{R}\setminus\{\overline{u}_{1}\}.$

(a), (b) and the first part of (c) follow from the discussion above
on $f$ and $\overline{\varsigma}_{1}.$

(c) Assume that $0<\tau^{2}<\eta$. The fact that $-\mu>\overline{\varsigma}_{2}>\rho>\overline{\varsigma}_{3}>-\tfrac{1}{2}\nu\alpha^{2}$
follows from Table \ref{tab-extra}. Also from the behavior of $h_{\tau}$
shown in Table \ref{tab3} we have that \[
h_{\tau}(\overline{\varsigma}_{2})=\min_{\varsigma\in(\overline{\varsigma}_{3},-\mu)}h_{\tau}(\varsigma),\quad h_{\tau}(\overline{\varsigma}_{3})=\max_{\varsigma\in(-\infty,\overline{\varsigma}_{2})}h_{\tau}(\varsigma).\]
 According to Theorem \ref{Thm1} $(\overline{u}_{i},\overline{\varsigma}_{i})$
is a critical point of $\Xi_{\tau}$, $p_{\tau}^{\prime}(\overline{u}_{i})=0$,
$h_{\tau}(\overline{\varsigma}_{i})=p_{\tau}(\overline{u}_{i})$ and
$p_{\tau}^{\prime\prime}(\overline{u}_{i})=3\left(\overline{\varsigma}_{i}-\rho\right)$
for $i\in\{2,3\}$. Because $\overline{\varsigma}_{2}>\rho>\overline{\varsigma}_{3}$,
we have that $p_{\tau}^{\prime\prime}(\overline{u}_{2})>0$ and $p_{\tau}^{\prime\prime}(\overline{u}_{3})<0$,
proving that $\overline{u}_{2}$ is a strict local minimum point of
$p_{\tau}$ and $\overline{u}_{3}$ is a strict local maximum point
of $p_{\tau}$.

(d) For $\tau=0$ we have \[
p_{0}(y)=\tfrac{1}{8}\nu\left(y^{2}-2\alpha y+2\nu^{-1}\mu\right)^{2}-\tfrac{1}{2}\nu^{-1}\mu^{2}=\tfrac{1}{8}\nu\left(y-\overline{u}_{1}\right)^{2}\left(y-\overline{u}_{2}\right)^{2}-\tfrac{1}{2}\nu^{-1}\mu^{2}.\]
 Taking into account that $h_{0}(\varsigma)=-\frac{1}{2}\left[\alpha^{2}(\varsigma+\mu)+\nu^{-1}\varsigma^{2}\right]$
for $\varsigma\in\mathbb{R}$, relation (\ref{tau0}) and the rest
of the conclusion are straightforward. \end{proof}

\begin{corollary} \label{cor1}Assume that $2\mu<\nu\alpha^{2}$.
With the notation of the preceding proposition, if $0<\tau^{2}<\eta$
then $p_{\tau}(\overline{u}_{3})>p_{\tau}(\overline{u}_{2})>p_{\tau}(\overline{u}_{1}).$
\end{corollary}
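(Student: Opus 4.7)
The plan is to derive the two strict inequalities from Proposition \ref{prop1}(c) together with the behavior of $h_\tau$ recorded in Table \ref{tab3}, using that $p_\tau(\bar u_i)=h_\tau(\bar\varsigma_i)$ for $i\in\{1,2,3\}$.

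First I would prove $p_\tau(\bar u_2)>p_\tau(\bar u_1)$. By Proposition \ref{prop1}(c) (namely the relation \eqref{r-v1} inherited from the case $\tau^2<\eta$), $\bar u_1$ is the \emph{unique} strict global minimum of $p_\tau$ on $\mathbb{R}$. So it suffices to check that $\bar u_2\neq \bar u_1$. Because $\bar\varsigma_1>-\mu$ and $\bar\varsigma_2<-\mu$, the quantities $\tau/(\bar\varsigma_1+\mu)$ and $\tau/(\bar\varsigma_2+\mu)$ have opposite signs (and both are nonzero since $\tau\neq0$). Hence, using the formula $u_\varsigma=\alpha+\tau/(\varsigma+\mu)$ from \eqref{vzeta}, $\bar u_1=u_{\bar\varsigma_1}\neq u_{\bar\varsigma_2}=\bar u_2$, so the strict minimality of $\bar u_1$ gives $p_\tau(\bar u_2)>p_\tau(\bar u_1)$.

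Next I would prove $p_\tau(\bar u_3)>p_\tau(\bar u_2)$. Using again the identity $p_\tau(\bar u_i)=h_\tau(\bar\varsigma_i)$ (from Theorem \ref{Thm1}(v) applied to the solutions $\bar\varsigma_2,\bar\varsigma_3\neq-\mu$ of \eqref{ec-f}), the inequality is equivalent to $h_\tau(\bar\varsigma_3)>h_\tau(\bar\varsigma_2)$. By Proposition \ref{prop1}(c) we have $\bar\varsigma_3<\bar\varsigma_2$ with both lying in $(-\tfrac12\nu\alpha^2,-\mu)$, and Table \ref{tab3} shows that $h_\tau$ is strictly decreasing on the interval $[\bar\varsigma_3,\bar\varsigma_2]$ (its only critical points in this range are the endpoints themselves). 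Therefore $h_\tau(\bar\varsigma_3)>h_\tau(\bar\varsigma_2)$, as desired.

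I do not anticipate a real obstacle: the main work has already been done in Proposition \ref{prop1}. The only subtlety is the first inequality, where one must explicitly invoke the uniqueness of the global minimizer of $p_\tau$ rather than merely the fact that $\bar u_1$ is \emph{a} global minimum, and verify $\bar u_2\neq\bar u_1$ via the sign discussion above.
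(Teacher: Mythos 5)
Your argument is correct and follows essentially the same route as the paper: the first inequality comes from the strict global minimality of $\overline{u}_{1}$ in (\ref{r-v1}) once $\overline{u}_{2}\neq\overline{u}_{1}$ is known, and the second from $p_{\tau}(\overline{u}_{i})=h_{\tau}(\overline{\varsigma}_{i})$ together with the strict decrease of $h_{\tau}$ on $(\overline{\varsigma}_{3},\overline{\varsigma}_{2})$ shown in Table \ref{tab3}. Your sign argument via $u_{\varsigma}=\alpha+\tau/(\varsigma+\mu)$ actually supplies a justification of $\overline{u}_{2}\neq\overline{u}_{1}$ that the paper merely asserts.
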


\begin{proof} Since $ $$\overline{u}_{2}\neq\overline{u}_{1}$,
from (\ref{r-v1}) we get $p_{\tau}(\overline{u}_{2})>p_{\tau}(\overline{u}_{1})$.
Also, from (\ref{z2v2}), (\ref{z3v3}), and Table \ref{tab3} $p_{\tau}(\overline{u}_{3})=h_{\tau}(\overline{\varsigma}_{3})>h_{\tau}(\overline{\varsigma}_{2})=p_{\tau}(\overline{u}_{2})$.
\end{proof}

\section{Application to a nonconvex variational problem\label{sec2}}

Let us consider $\tau_{\theta},\alpha,\mu>0$, $0<a<b,$ $\sigma:[a,b]\rightarrow\mathbb{R}$
be defined by $\sigma(r):=b^{2}\tau_{\theta}r^{-2}$ and $\widehat{\mathcal{P}}:\mathcal{L}^{4}(a,b)\rightarrow\mathbb{R}$
be defined by \begin{equation}
\widehat{\mathcal{P}}(v):=2\pi\int_{a}^{b}\left[\tfrac{1}{2}\mu r^{2}v^{2}+\tfrac{1}{2}\nu\left(\tfrac{1}{2}r^{2}v^{2}-\alpha rv\right)^{2}-\sigma v\right]rdr=2\pi\int_{a}^{b}rp_{-\beta}(rv)dr,\label{ph}\end{equation}
 where $v\in\mathcal{L}^{4}(a,b)$, $\beta(r):=\alpha\mu-\sigma(r)$,
$r\in[a,b]$, and $p_{-\beta}$ is defined via (\ref{def-p}). In
the previous formula and the subsequent ones we use $v,\sigma$, and
$\beta$ instead of $v(r),\sigma(r)$, and $\beta(r)$ for the simplicity
of notation.

Throughout this note we use the notation $\mathcal{L}^{p}:=\mathcal{L}^{p}(a,b)$,
$p\ge1$, and the convention $0/0:=0$, which agrees with the convention
$0\cdot(\pm\infty):=0$ used in measure theory. With this convention
in mind, consider

\begin{equation}
A_{1}:=\left\{ \zeta\in\mathcal{L}^{2}\,\bigg|\,\frac{\beta^{2}}{\zeta+\mu}\in\mathcal{L}^{1}\right\} ;\label{r-a1}\end{equation}
 and the function \begin{equation}
\mathcal{P}^{d}:A_{1}\rightarrow\mathbb{R},\quad\mathcal{P}^{d}(\zeta):=-\pi\int_{a}^{b}\left(\frac{(\sigma+\alpha\zeta)^{2}}{\zeta+\mu}+\nu^{-1}\zeta^{2}\right)rdr.\label{pd-first}\end{equation}
 Taking into account (\ref{pdsc}) below, $A_{1}$ is the biggest
subset of $\mathcal{L}^{2}$ for which $\mathcal{P}^{d}$ is well
defined, i.e, $\mathcal{P}^{d}(\zeta)\in\mathbb{R}$ iff $\zeta\in A_{1}$.
Clearly, \begin{equation}
A_{1}\subset A_{2}:=\left\{ \zeta\in\mathcal{L}^{2}\mid\zeta(x)+\mu\neq0\text{ for a.e. }x\in[a,b]\right\} .\label{r-a2}\end{equation}
 Since \begin{equation}
\frac{(\sigma+\alpha\zeta)^{2}}{\mu+\zeta}+\nu^{-1}\zeta^{2}=\frac{\beta^{2}}{\zeta+\mu}-2\alpha\beta+\alpha^{2}(\zeta+\mu)+\nu^{-1}\zeta^{2},\label{pdsc}\end{equation}
 we have that, \begin{equation}
\mathcal{P}^{d}(\zeta)=2\pi\int_{a}^{b}rh_{-\beta(r)}(r)dr,\label{pd}\end{equation}
 where $h_{-\beta(r)}$ is given by (\ref{def-h}) or (\ref{def-h0}).

Taking into account the expressions of $\widehat{\mathcal{P}}$ in
(\ref{ph}) and $\mathcal{P}^{d}$ in (\ref{pd}) and after applying
Proposition \ref{prop1} for $\tau=-\beta(r)$ we get the following
result.

\begin{proposition} \label{prop2}Assume that $2\mu<\nu\alpha^{2}$.

\emph{(a)} If $\beta^{2}>\eta$ on $[a,b]$, (that is, $\beta^{2}(r)>\eta$
for every $r\in[a,b]$) then equation (\ref{ec-f}) corresponding
to $\tau=-\beta$ has a unique solution $\overline{\zeta}\in C[a,b]\cap A_{1}$
with $\overline{\zeta}>-\mu$ on $[a,b]$, and for $\overline{v}:=r^{-1}\left(\alpha-\beta/(\overline{\zeta}+\mu)\right)$
one has \begin{equation}
\widehat{\mathcal{P}}\mathcal{(}\overline{v})=\min_{v\in\mathcal{L}^{4}}\widehat{\mathcal{P}}(v)=\max_{\zeta\in A_{1},\zeta>-\mu}\mathcal{P}^{d}(\zeta)=\mathcal{P}^{d}(\overline{\zeta}).\label{r-caz-a}\end{equation}

\emph{(b)} If $0<\beta^{2}<\eta$ on $[a,b]$ then equation (\ref{ec-f})
corresponding to $\tau=-\beta$ has three solutions $\overline{\zeta}_{1},\overline{\zeta}_{2},\overline{\zeta}_{3}\in C[a,b]\cap A_{1}$
with $\overline{\zeta}_{1}>-\mu>\overline{\zeta}_{2}>\rho>\overline{\zeta}_{3}>-\tfrac{1}{2}\nu\alpha^{2}$
on $[a,b]$. Moreover, for $\overline{v}_{i}:=r^{-1}\left(\alpha-\beta/(\overline{\zeta}_{i}+\mu)\right)$
$(i\in\{1,2,3\})$, one has \begin{gather}
\widehat{\mathcal{P}}\mathcal{(}\overline{v}_{1})=\min_{v\in\mathcal{L}^{4}}\widehat{\mathcal{P}}(v)=\max_{\zeta\in A_{1},\zeta>-\mu}\mathcal{P}^{d}(\zeta)=\mathcal{P}^{d}(\overline{\zeta}_{1}),\label{r-caz-b1}\\
\widehat{\mathcal{P}}\mathcal{(}\overline{v}_{2})=\min_{\zeta\in A_{1},\overline{\zeta}_{3}<\zeta<-\mu}\mathcal{P}^{d}(\zeta)=\mathcal{P}^{d}(\overline{\zeta}_{2}),\label{r-caz-b2}\\
\widehat{\mathcal{P}}\mathcal{(}\overline{v}_{3})=\max_{\zeta\in A_{1},\zeta<\overline{\zeta}_{2}}\mathcal{P}^{d}(\zeta)=\mathcal{P}^{d}(\overline{\zeta}_{3}).\label{r-caz-b3}\end{gather}

\end{proposition}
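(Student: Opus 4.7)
The plan is to lift Proposition \ref{prop1} from the pointwise equations (one for each $r\in[a,b]$) to the integral functionals $\widehat{\mathcal{P}}$ and $\mathcal{P}^d$, exploiting the fiberwise decompositions $\widehat{\mathcal{P}}(v)=2\pi\int_{a}^{b}r\,p_{-\beta(r)}(rv(r))\,dr$ and $\mathcal{P}^d(\zeta)=2\pi\int_{a}^{b}r\,h_{-\beta(r)}(\zeta(r))\,dr$ together with the pointwise duality identity (\ref{r-dual}). All optimization information is already available fiberwise; the remaining task is to produce selections $r\mapsto\overline{\zeta}_i(r)$ with the correct regularity and then integrate the pointwise inequalities.

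First I would construct the candidates $\overline{\zeta}_i$ and verify they are continuous on $[a,b]$ and lie in $A_1$. Since $\sigma(r)=b^{2}\tau_{\theta}r^{-2}$ is continuous on $[a,b]$ (with $a>0$), so is $\beta=\alpha\mu-\sigma$, and the hypotheses $\beta^2>\eta$ or $0<\beta^2<\eta$ hold uniformly on all of $[a,b]$. Table \ref{tab1} shows that $f$ is a strictly monotone homeomorphism on each of $(-\tfrac{1}{2}\nu\alpha^{2},\rho)$, $(\rho,-\mu)$ and $(-\mu,\infty)$, so the relevant local inverses of $f$ are continuous; composing them with $\beta^{2}$ produces continuous selections $\overline{\zeta}_i$ located on the branches indicated by Proposition \ref{prop1}. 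Compactness of $[a,b]$ together with continuity then force each $\overline{\zeta}_i+\mu$ to be bounded away from $0$, so $\beta^{2}/(\overline{\zeta}_i+\mu)$ is bounded; hence $\overline{\zeta}_i\in C[a,b]\cap A_1$ and, using $r\ge a>0$, the corresponding $\overline{v}_i=r^{-1}(\alpha-\beta/(\overline{\zeta}_i+\mu))$ is continuous and bounded, hence in $\mathcal{L}^4$.

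Next I would integrate the pointwise extremality statements. For (a), Proposition \ref{prop1}(a) combined with Theorem \ref{Thm1}(vi) gives, at each $r$, that $r\overline{v}(r)$ is the global minimum of $p_{-\beta(r)}$ on $\mathbb{R}$ and $\overline{\zeta}(r)$ is the global maximum of $h_{-\beta(r)}$ on $(-\mu,\infty)$, with common value by (\ref{r-dual}); so for every $v\in\mathcal{L}^4$ and every $\zeta\in A_1$ with $\zeta>-\mu$ a.e.\ the fiberwise inequalities $p_{-\beta(r)}(rv(r))\ge p_{-\beta(r)}(r\overline{v}(r))$ and $h_{-\beta(r)}(\zeta(r))\le h_{-\beta(r)}(\overline{\zeta}(r))$, after multiplication by $r>0$ and integration, yield $\widehat{\mathcal{P}}(v)\ge\widehat{\mathcal{P}}(\overline{v})=\mathcal{P}^d(\overline{\zeta})\ge\mathcal{P}^d(\zeta)$, which is exactly (\ref{r-caz-a}). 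Part (b) uses the same mechanism with Proposition \ref{prop1}(c): the fiberwise global minimum of $p_{-\beta(r)}$ at $r\overline{v}_1(r)$ produces (\ref{r-caz-b1}), while the fiberwise extrema (\ref{z2v2}) and (\ref{z3v3}) of $h_{-\beta(r)}$ on $(\overline{\zeta}_3(r),-\mu)$ and $(-\infty,\overline{\zeta}_2(r))$ respectively, integrated against $r\,dr$, give (\ref{r-caz-b2}) and (\ref{r-caz-b3}); the identifications $\widehat{\mathcal{P}}(\overline{v}_i)=\mathcal{P}^d(\overline{\zeta}_i)$ again follow from (\ref{r-dual}) applied pointwise and then integrated.

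The main obstacle is not the optimization itself, which is already settled by Proposition \ref{prop1}, but the regularity and the separation-from-$-\mu$ of the fiberwise selections $\overline{\zeta}_i$; once these are in hand, the integral conclusions are immediate. These regularity properties are obtained by reading off the monotone branches of $f$ from Table \ref{tab1}, using continuity of $\beta^{2}$, and invoking compactness of $[a,b]$.
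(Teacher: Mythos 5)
Your proposal is correct and follows essentially the same route as the paper: continuity of the selections $\overline{\zeta}_i$ is obtained by inverting $f$ on its monotone branches from Table \ref{tab1} (the paper notes these inverses are even smooth since $\beta\in C^{\infty}[a,b]$, and uses $\beta^{2}>0$ to keep $\overline{\zeta}_i$ away from $-\mu$, hence in $A_1$), and the extremality relations are then obtained exactly as you describe, by applying Proposition \ref{prop1}(a) and (c) fiberwise, multiplying by $2\pi r>0$ and integrating over $[a,b]$.
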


\begin{proof}The existence and continuity of the solutions $\overline{\zeta}$
in case (a) and $\overline{\zeta}_{1}$, $\overline{\zeta}_{2},$
$\overline{\zeta}_{3}$ in case (b) follow from the behavior of $f$
presented in Table \ref{tab1} and the fact that $\beta\in C^{\infty}[a,b]$.
More precisely, the critical point of $f$ are $-\mu$ and $\rho$
with $f(-\mu)=0$, $f(\rho)=\eta$ (see Table \ref{tab-extra}) so
the inverses of $f$ restricted to each of the intervals $(-\tfrac{1}{2}\nu\alpha^{2},\rho)$,
$(\rho,-\mu)$, $(-\mu,\infty)$ exist and are smooth. Since for $\beta^{2}>0$
the solution $\varsigma(r)$ of the equation $f(\varsigma)=\beta^{2}(r)$
is different from $-\mu$, we have that $\overline{\zeta},\overline{\zeta}_{1},\overline{\zeta}_{2},\overline{\zeta}_{3}\in A_{1}$
and $\overline{v},\overline{v}_{1},\overline{v}_{2},\overline{v}_{3}$
are continuous and implicitly belong to $\mathcal{L}^{4}.$

(a) For every $v\in\mathcal{L}^{4}$ and $\zeta\in A_{1}$ with $\zeta>-\mu$
on $[a,b]$, apply Proposition \ref{prop1}(a) for a fixed $r\in[a,b]$,
$u:=rv(r)$, and $\varsigma:=\zeta(r)$ to obtain \[
p_{-\beta(r)}(rv(r))\geq p_{-\beta(r)}(r\overline{v}(r))=h_{-\beta(r)}(\overline{\zeta}(r))\geq h_{-\beta(r)}(\zeta(r)).\]
 Multiplying by $2\pi r>0$ and integrating on $[a,b]$ provide \[
\widehat{\mathcal{P}}\mathcal{(}v)\geq\widehat{\mathcal{P}}\mathcal{(}\overline{v})=\mathcal{P}^{d}(\overline{\zeta})\geq\mathcal{P}^{d}(\zeta)\]
 for all $v\in\mathcal{L}^{4}$ and $\zeta\in A_{1}$ with $\zeta>-\mu$.
The conclusion follows. The proof for (b) follows similarly from Proposition
\ref{prop1}(c).\end{proof}

\section{Some comments on \cite[Th.\ 4]{Gao/Ogden:08}}

On \cite[p. 502]{Gao/Ogden:08} one says:

$\bullet$ {}``we consider the kinetically admissible space to be
defined by

$\mathcal{X}_{a}=\{g(r)\in\mathcal{C}[a,b]\mid g^{\prime}\in\mathcal{L}^{p}[a,b],~g(a)=0\},\quad(19)$

\noindent where $\mathcal{L}^{p}$ is the space of Lebesgue integrable
functions for some $p\in[1,\infty)$.''

\noindent followed by:

$\bullet$ {}``Then the considered problem can be formulated as the
following minimization problem for the determination of the deformation
function $g$:

$\min\limits _{g\in\mathcal{X}_{a}}\left\{ \mathcal{P}(g)=2\pi{\displaystyle \int_{a}^{b}}\hat{W}(rg^{\prime}(r))rdr-2\pi b^{2}\tau_{\theta}g(b)\right\} $.
$\quad$(20)''

\medskip{}

\noindent Later on (see \cite[p.\ 506]{Gao/Ogden:08}) one says:

$\bullet$ {}``In this section the strain energy is assumed to be
the nonconvex function of the shear strain $\gamma$ given by

$\hat{W}(\gamma)=\tfrac{1}{2}\mu\gamma^{2}+\tfrac{1}{2}\nu\left(\tfrac{1}{2}\gamma^{2}-\alpha\gamma\right)^{2}$,
$\quad$(39)

\noindent where $\mu>0$, $\nu>0$ and $\alpha\in\mathbb{R}$ are
material constants.''

\medskip{}

\noindent On the next page one continues with

$\bullet$ {}``... there is no loss of generality in restricting
attention to $\alpha>0$, which we do here.'',

$\bullet$ {}``The parameters $\alpha,\mu,\nu$ are then such that

$2\mu<\nu\alpha^{2}<8\mu$, $\quad$(40)

\noindent and for the most part we restrict attention to this range
of values.'',

$\bullet$ {}``In this section, therefore, we restrict attention
to the more interesting case for which $\tau_{\theta}>0$, and hence
$\gamma>0$.

For this situation we define a two-component canonical strain measure
$\mathbf{\xi}$ by

$\mathbf{\xi}=(\varepsilon,\xi)=\Lambda(g)=\left(\tfrac{1}{2}(rg^{\prime})^{2},\tfrac{1}{2}(rg^{\prime})^{2}-\alpha(rg^{\prime})\right)\in\mathbb{R}^{2},$

\noindent and the canonical energy $U(\mathbf{\xi})=\mu\varepsilon+\tfrac{1}{2}\nu\xi^{2}$
is then a convex (quadratic) function, which is well defined on the
domain

$\mathcal{E}=\{(\varepsilon,\xi)\in\mathcal{L}^{1}\times\mathcal{L}^{2}\mid\varepsilon(r)\geq0,~\xi(r)\geq-\tfrac{1}{2}\alpha^{2},\ \forall r\in[a,b]\}$.

\noindent The canonical dual `stress' vector

$\mathbf{\zeta}=(\varsigma,\zeta)=U_{\mathbf{\xi}}(\mathbf{\xi})=(\mu,\nu\xi)\in\mathbb{R}^{2},$

\noindent where $U_{\mathbf{\xi}}(\mathbf{\xi})=\partial U/\partial\xi$,
is well defined on the dual space

$\mathcal{S}=\{(\varsigma,\zeta)\in\mathcal{L}^{\infty}\times\mathcal{L}^{2}\mid\varsigma(r)=\mu,~\zeta(r)\geq-\tfrac{1}{2}\nu\alpha^{2},forallr\in[a,b]\}$.''

\noindent and

$\bullet$ {}``... we obtain the total complementary energy $X(g,\zeta)$
for this nonconvex problem in the form

$X(g,\zeta)=...=2\pi\int_{a}^{b}\left[\tfrac{1}{2}(rg^{\prime})^{2}(\zeta+\mu)-\alpha
rg^{\prime}\zeta-\tfrac{1}{2}\nu^{-1}\zeta^{2}\right]rdr-2\pi
b^{2}\tau_{\theta}g(b)$. $\ $(41)''

\medskip{}

\noindent Page 509 of \cite{Gao/Ogden:08} begins with:

$\bullet$ {}``For a given $\mathbf{\zeta}\in\mathcal{S}$, the criticality
condition $\delta_{g}X(g,\zeta)=0$ leads to the equation

$\left((\zeta+\mu)r^{3}g^{\prime}-r^{2}\alpha\zeta\right)^{\prime}=0,\quad r\in(a,b),$

\noindent and the boundary condition

$(\zeta+\mu)r^{3}g^{\prime}-r^{2}\alpha\zeta=b^{2}\tau_{\theta}\quad$on$\quad r=b$.''

\noindent followed by

$\bullet$ {}``Therefore, by substituting $\gamma=rg^{\prime}=(\sigma+\alpha\zeta)/(\zeta+\mu)$
into $X$, the pure complementary energy $\mathcal{P}^{d}$ can be
obtained by the canonical dual transformation

\noindent  $\mathcal{P}^{d}(\zeta)=\left\{
X(g,\zeta)\mid\delta_{g}X(g,\zeta)=0\right\} =-\pi{\displaystyle
\int_{a}^{b}\left(\frac{(\sigma+\alpha\zeta)^{2}}{\zeta+\mu}+\nu^{-1}\zeta^{2}\right)}rdr$,
$\quad$(42)

\noindent which is well defined on the dual feasible space

$\mathcal{S}_{a}=\left\{ \zeta\in\mathcal{L}^{2}\mid\zeta(r)+\mu\neq0,\right\} .$

\noindent The criticality condition $\delta\mathcal{P}^{d}(\zeta)=0$
leads to the dual algebraic equation

$\left(2\nu^{-1}\zeta+\alpha^{2}\right)(\mu+\zeta)^{2}=(\sigma-\mu\alpha)^{2}$.
$\quad$(43)''.

\medskip{}

\noindent On page 510 of \cite{Gao/Ogden:08} one can find:

$\bullet$ {}``... for simplicity of expression, we have introduced
the notations $\beta=\mu\alpha-\sigma$ and

$\eta=(\nu\alpha^{2}-2\mu)^{3}/27\nu$. $\quad$(48)''

\medskip{}

Note that $\mathcal{P}^{d}(\zeta)$ is exactly as in (\ref{pd-first}).

\medskip{}

Before quoting the result we have in view let us shortly discuss the
quoted text above.

Note that $\sigma(r)=b^{2}\tau_{\theta}/r^{2}$ (as taken at the beginning
of our Section \ref{sec2}), is mentioned on page 509 of \cite{Gao/Ogden:08}
(see also \cite[page 505]{Gao/Ogden:08}); probably one takes $0<a<b<\infty,$
assumptions that we accept here. Hence the set $B_{0}:=\{s\in[a,b]\mid\beta(s)=0\}$
has at most one element.

Probably, by {}``well defined on ... $\mathcal{S}_{a}$'' above
one means that $\mathcal{P}^{d}(\zeta)\in\mathbb{R}$ for every $\zeta\in\mathcal{S}_{a},$
that is, $\mathcal{S}_{a}\subset A_{1}$ {[}see (\ref{r-a1}){]}.
But this inclusion is false as we will see from the next result. However
$\mathcal{S}_{a}\subset A_{2}$.

\begin{lemma} \label{dom-gresit}Under the current notations and
assumptions $\mathcal{S}_{a}\not\subset A_{1}$ and $\mathcal{P}^{d}$
is not well defined on $\mathcal{S}_{a}$.\end{lemma}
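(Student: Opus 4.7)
The plan is to exhibit an explicit $\zeta \in \mathcal{S}_a$ for which $\beta^2/(\zeta+\mu) \notin \mathcal{L}^1$, so that $\zeta \notin A_1$ and $\mathcal{P}^d(\zeta)$ is not a finite real number. Since $\sigma(r) = b^2\tau_\theta r^{-2}$ is smooth and strictly monotone on $[a,b]$, the function $\beta = \alpha\mu - \sigma$ is continuous and the set $B_0 = \{r \in [a,b] : \beta(r) = 0\}$ has at most one element. I would therefore pick $r_0$ in the open interval $(a,b) \setminus B_0$; by continuity of $\beta$ there exist constants $c,\delta > 0$ with $[r_0 - \delta, r_0 + \delta] \subset (a,b)$ and $\beta^2(r) \geq c$ throughout this subinterval.

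Next I would define $\zeta(r) := -\mu + |r - r_0|$ on $[a,b]$. This $\zeta$ is continuous, hence in $\mathcal{L}^2$, and $\zeta(r) + \mu = |r - r_0|$ vanishes only at the single point $r_0$; choosing a representative that is nonzero there yields $\zeta \in \mathcal{S}_a$ (in either the pointwise or the a.e.\ reading of the defining condition). The key estimate is
\begin{equation*}
\int_a^b \frac{\beta^2(r)}{\zeta(r) + \mu}\, dr \;\geq\; c \int_{r_0 - \delta}^{r_0 + \delta} \frac{dr}{|r - r_0|} \;=\; +\infty,
\end{equation*}
so $\zeta \notin A_1$. Combined with the identity (\ref{pdsc}) and the fact that the remaining summands $-2\alpha\beta$, $\alpha^2(\zeta+\mu)$, $\nu^{-1}\zeta^2$ all lie in $\mathcal{L}^1[a,b]$ (against the bounded weight $r$) because $\beta$ is continuous and $\zeta \in \mathcal{L}^2$ on a bounded interval, the integrand defining $\mathcal{P}^d(\zeta)$ in (\ref{pd-first}) has a non-integrable positive part. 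Hence $\mathcal{P}^d(\zeta)$ cannot be a real number, and $\mathcal{P}^d$ is not well defined on $\mathcal{S}_a$.

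There is no serious obstacle in the argument; the only creative step is choosing the test function. The tension is between requiring $\zeta \in \mathcal{L}^2$ with $\zeta + \mu \neq 0$ almost everywhere on the one hand, and driving $\zeta + \mu$ toward $0$ fast enough on a set where $\beta$ is bounded away from $0$ to destroy integrability of $\beta^2/(\zeta+\mu)$ on the other. The piecewise linear choice $-\mu + |r - r_0|$ accomplishes both simultaneously, with the elementary divergence of $\int dr/|r-r_0|$ doing all the work.
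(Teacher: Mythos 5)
Your proof is correct and takes essentially the same approach as the paper: both use the fact that $B_{0}$ has at most one point to find a subinterval on which $\beta^{2}$ is bounded below by a positive constant, then build a continuous $\zeta\in\mathcal{S}_{a}$ whose shift $\zeta+\mu$ decays linearly to $0$ at a single point of that subinterval, so that $\int_{a}^{b}\beta^{2}/(\zeta+\mu)\,dr$ diverges logarithmically and $\zeta\notin A_{1}$. The only differences are cosmetic: the paper puts the zero of $\zeta+\mu$ at an endpoint of the subinterval rather than at an interior point, and leaves implicit the (easy) integrability of the remaining terms in (\ref{pdsc}), which you spell out.
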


\begin{proof} Recall that $\beta(r)=\alpha\mu-b^{2}\tau_{\theta}/r^{2}$
and note that $B_{0}=\{s\in[a,b]\mid\beta(s)=0\}\subset\{\sqrt{b^{2}\tau_{\theta}/(\alpha\mu)}\}$
so there exist $(c,d)\subset(a,b)$ and $\gamma>0$ such that $\beta^{2}(x)\geq\gamma$
for every $x\in[c,d]$. Let $\zeta(r)=-\mu+r-c$ for $r\in(c,d)$,
$\zeta(r)=1-\mu$ for $r\in[a,b]\setminus(c,d)$. Then $\zeta\in\mathcal{L}^{2}$
and $\zeta>-\mu>-\tfrac{1}{2}\nu\alpha^{2}$ on $[a,b]$, i.e., $\zeta\in\mathcal{S}_{a}$.
Also $\frac{\beta^{2}}{\zeta+\mu}\ge\frac{\gamma}{\zeta+\mu}>0$ on
$[c,d]$ and $\int_{a}^{b}\frac{\beta^{2}(r)}{\zeta(r)+\mu}dr\ge\gamma\int_{c}^{d}\frac{dr}{r-c}=+\infty$,
that is $\zeta\not\in A_{1}$. \end{proof}

\medskip{}

Let us denote the algebraic interior (or core) of a set by {}``$\operatorname*{core}$''.

\begin{lemma} \label{empty-core}Under the current notations and
assumptions $\operatorname*{core}A_{2}$ is empty. In particular,
$\operatorname*{core}A_{1}=\operatorname*{core}\mathcal{S}_{a}=\emptyset$.\end{lemma}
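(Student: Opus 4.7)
The plan is to show that for every $\zeta_{0}\in\mathcal{L}^{2}$ one can exhibit a single direction $\eta\in\mathcal{L}^{2}$ and a sequence $t_{n}\searrow 0$ along which $\zeta_{0}+t_{n}\eta=-\mu$ on a set of positive measure. This directly forbids $\zeta_{0}\in\operatorname*{core}A_{2}$, and since $\zeta_{0}$ is arbitrary, $\operatorname*{core}A_{2}=\emptyset$. The consequences for $A_{1}$ and $\mathcal{S}_{a}$ then follow from the elementary monotonicity $C\subset D\Rightarrow\operatorname*{core}C\subset\operatorname*{core}D$ applied to the inclusions $A_{1}\subset A_{2}$ and $\mathcal{S}_{a}\subset A_{2}$ (the latter holding because membership in $\mathcal{L}^{2}$ is already understood up to a.e.~equality).

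The construction goes as follows. Since $\zeta_{0}+\mu$ is a.e.\ finite, the truncation sets $F_{k}:=\{r\in[a,b]:|\zeta_{0}(r)+\mu|\le k\}$ satisfy $|F_{k}|\nearrow b-a$, so I fix $k$ large enough that $|F_{k}|>1/k^{2}$. Atomlessness of Lebesgue measure permits the choice of pairwise disjoint measurable $E_{n}\subset F_{k}$ with $|E_{n}|=2^{-n}/(k^{2}n^{2})$; feasibility is clear since $\sum_{n}|E_{n}|<1/k^{2}\le |F_{k}|$. I then set $t_{n}:=1/n$ and define $\eta(r):=-n\bigl(\zeta_{0}(r)+\mu\bigr)$ for $r\in E_{n}$ and $\eta(r):=0$ elsewhere.

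A short estimate shows $\eta\in\mathcal{L}^{2}$: on $E_{n}$ one has $|\eta|^{2}\le n^{2}k^{2}$, so $\int_{E_{n}}|\eta|^{2}\le 2^{-n}$, and summing yields $\|\eta\|_{\mathcal{L}^{2}}^{2}\le 1$. Conversely, for every $r\in E_{n}$ the identity $(\zeta_{0}+t_{n}\eta)(r)=\zeta_{0}(r)-(\zeta_{0}(r)+\mu)=-\mu$ holds, so $\zeta_{0}+t_{n}\eta$ coincides with $-\mu$ on the positive-measure set $E_{n}$; consequently $\zeta_{0}+t_{n}\eta\notin A_{2}$ for every $n$, as required.

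The only mild obstacle is balancing two competing demands on the $E_{n}$'s: they must have measure small enough to keep $\|\eta\|_{\mathcal{L}^{2}}$ finite in spite of the blowing-up factor $n$, yet lie in a region where $|\zeta_{0}+\mu|$ is uniformly bounded so that the pointwise multiplication is controlled. The truncation $F_{k}$ together with the geometric choice $|E_{n}|=2^{-n}/(k^{2}n^{2})$ handles both constraints at once, and the rest of the argument is a matter of reading off the definition of the algebraic interior.
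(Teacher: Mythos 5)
Your proposal is correct and takes essentially the same route as the paper's proof: a single direction equal to $-n\left(\zeta_{0}+\mu\right)$ on a sequence of small disjoint sets $E_{n}$, so that $\zeta_{0}+\tfrac{1}{n}\eta=-\mu$ on the positive-measure set $E_{n}$ and hence $\zeta_{0}+t_{n}\eta\notin A_{2}$ along $t_{n}=1/n$, followed by monotonicity of the core for $A_{1}$ and $\mathcal{S}_{a}$. Your truncation to $F_{k}$, which guarantees $\eta\in\mathcal{L}^{2}$ despite the factor $n$, is a careful step that the paper's construction (built on shrinking subintervals of a fixed interval) treats only implicitly.
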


\begin{proof} Indeed, because $\sigma$ is continuous, there exist
$\overline{\delta}>0$ and $a\leq a^{\prime}<b^{\prime}\leq b$ such
that $\left\vert \beta\left(r\right)\right\vert \geq\overline{\delta}>0$
for every $r\in I=(a^{\prime},b^{\prime})$. Take $\overline{\zeta}\in A_{2}$
and define $u$ by $u(r)=n\big[a^{\prime}+\frac{b^{\prime}-a^{\prime}}{n}-r-\overline{\zeta}(r)-\mu\big]$
for $r\in[a^{\prime}+\frac{b^{\prime}-a^{\prime}}{n+1},a^{\prime}+\frac{b^{\prime}-a^{\prime}}{n})$
with $n\geq1$, and $u(r)=0$ for $r\in[a,b]\setminus I$. Then for
every $\delta>0$ there exists $t=\frac{1}{n}\in(0,\delta)$ such
that $\overline{\zeta}+tu\notin A_{2}$. (Note that for $\overline{\zeta}\in A_{1}$
and for $u$ constructed as above we have that for every $\delta>0$
there exists $t\in(0,\delta)$ such that $\overline{\zeta}+tu\notin A_{1}$).
\end{proof}

\bigskip{}

To our knowledge, one can speak about G\^{a}teaux differentiability of
a function $f:E\subset X\rightarrow Y$, with $X,Y$ topological vector
spaces, at $\overline{x}\in E$ only if $\overline{x}$ is in the
core of $E$. As we have seen above, $\mathcal{P}^{d}(\zeta)\in\mathbb{R}$
only for $\zeta\in A_{1}$ and $\operatorname*{core}A_{1}=\emptyset.$
These considerations naturally lead to the following question:

\medskip{}

\emph{In what sense is the critical point notion associated to $\mathcal{P}^{d}$
understood so that when using this notion one gets \cite[(43)]{Gao/Ogden:08},
other than just formal computation?}

\medskip{}

Taking into account the comment (see \cite[page 509]{Gao/Ogden:08})

$\bullet$ {}``We emphasize that the integrand of $\mathcal{P}^{d}(\zeta)$
has a singularity at $\zeta=-\mu$, which is excluded in the definition
of $\mathcal{S}_{a}$, and does not in general correspond to a critical
point of $\mathcal{P}^{d}(\zeta)$'',

\noindent we wish to point out that there is an important difference
between the condition $\zeta\neq-\mu$ and $\zeta(r)\neq-\mu$ a.e.
on $[a,b]$ since it is known that $\zeta\neq-\mu$ means that $\zeta(x)\neq-\mu$
on a set of positive measure. Alternatively, $\mathcal{L}^{2}\setminus\{-\mu\}$
is a (nonempty) open set, while, as seen above, the set $A_{2}:=\left\{ \zeta\in\mathcal{L}^{2}\mid\zeta(r)+\mu\neq0\text{ for a.e. }r\in[a,b]\right\} $
has empty core (in particular has empty interior).

\medskip{}

From the quoted text it seems that $\zeta$ is taken from $\mathcal{L}^{2}$,
but it is not very clear from where $g$ is taken; apparently $g\in\mathcal{X}_{a}$,
that is, $g\in\mathcal{C}[a,b]$ is such that $g^{\prime}\in\mathcal{L}^{p}$
and $g(a)=0$ for some fixed $p\in[1,\infty)$. But, to have $(\varepsilon,\xi)\in\mathcal{L}^{1}\times\mathcal{L}^{2}$
where $\varepsilon(r)=\tfrac{1}{2}(rg^{\prime}(r))^{2}$ and $\xi(r)=\tfrac{1}{2}(rg^{\prime}(r))^{2}-\alpha rg^{\prime}(r)$
one needs $p\geq4$. Moreover, the statement $\left(\tfrac{1}{2}(rg^{\prime})^{2},\tfrac{1}{2}(rg^{\prime})^{2}-\alpha(rg^{\prime})\right)\in\mathbb{R}^{2}$
is quite strange because $g^{\prime}$ is a function (or even an equivalent
class).

\medskip{}

In the sequel we take $p=4$. Then $g$ in $\mathcal{X}_{a}$ has
to be an absolutely continuous function on $[a,b]$ with $g(a)=0$
and $g^{\prime}\in\mathcal{L}^{4}[a,b]$. In fact \[
g\in\mathcal{X}_{a}\Longleftrightarrow\exists v\in\mathcal{L}^{4},\ {\rm for\ all}\ r\in[a,b]:g(r)=\int_{a}^{r}v(t)dt.\]
 So, the problem $\min_{g\in\mathcal{X}_{a}}\mathcal{P}(g)$ above
becomes the problem $\min_{v\in\mathcal{L}^{4}}\widehat{\mathcal{P}}(v)$
with $\widehat{\mathcal{P}}(v)$ defined in (\ref{ph}).

Next, using the above considerations we discuss the following result
of \cite{Gao/Ogden:08}; we also quote its proof for easy reference.

\medskip{}

{}``\textbf{Theorem 4 (Extremality Criteria)}. For a given shear
stress $\tau_{\theta}>0$ such that $\sigma=b^{2}\tau_{\theta}/r^{2}$,
if $(\sigma-\alpha\mu)^{2}>\eta>0$ the dual algebraic equation (43)
has a unique real root $\overline{\zeta}(r)>-\mu$, which is a global
maximizer of $\mathcal{P}^{d}$ over $\mathcal{S}_{a}$, and the corresponding
solution $\overline{g}$ is a global minimizer of $\mathcal{P}(g)$
over $\mathcal{X}_{a}$, i.e.

$\mathcal{P(}\overline{g})=\min\limits _{g\in\mathcal{X}_{a}}\mathcal{P}(g)=\max\limits _{\zeta\in\mathcal{S}_{a}}\mathcal{P}^{d}(\zeta)=\mathcal{P}^{d}(\overline{\zeta})$.
$\quad$(51)

If $(\sigma-\alpha\mu)^{2}<\eta$ and $\sigma\neq\alpha\mu$ then
equation (43) has three real roots ordered as (49). The corresponding
solution $\overline{g}_{1}$ is a global minimizer of $\mathcal{P}(g)$
and $\overline{\zeta}_{1}$ is a global maximizer of $\mathcal{P}^{d}(\zeta)$
over the domain $\zeta>-\mu$, i.e.

$\mathcal{P(}\overline{g})=\min\limits _{g\in\mathcal{X}_{a}}\mathcal{P}(g)=\max\limits _{\zeta>-\mu}\mathcal{P}^{d}(\zeta)=\mathcal{P}^{d}(\overline{\zeta}_{1})$.
$\quad$(52)

For $\overline{\zeta}_{2}$, the corresponding solution $\overline{g}_{2}$
is a local minimizer of $\mathcal{P}(g)$, while for $\overline{\zeta}_{3}$
the associated $\overline{g}_{3}$ is a local maximizer, so that

$\mathcal{P(}\overline{g}_{2})=\min\limits _{g\in\mathcal{X}_{2}}\mathcal{P}(g)=\min\limits _{\overline{\zeta}_{3}<\zeta<-\mu}\mathcal{P}^{d}(\zeta)=\mathcal{P}^{d}(\overline{\zeta}_{2})$,
$\quad$(53)

\noindent and

$\mathcal{P(}\overline{g}_{3})=\max\limits _{g\in\mathcal{X}_{3}}\mathcal{P}(g)=\max\limits _{-\tfrac{1}{2}\nu\alpha^{2}<\zeta<\overline{\zeta}_{2}}\mathcal{P}^{d}(\zeta)=\mathcal{P}^{d}(\overline{\zeta}_{3})$,
$\quad$(54)

\noindent where $\mathcal{X}_{i}$ is a neighborhood of $\overline{g}_{i}$
for $i=2,3$.

In the transitional cases for which $\sigma-\alpha\mu=\pm\sqrt{\eta}$,
the two roots $\overline{\zeta}_{2}$ and $\overline{\zeta}_{3}$
coincide and the local extrema described by (53) and (54) merge into
a horizontal point of inflection. In the case $\sigma=\alpha\mu$
the two roots $\overline{\zeta}_{1}$ and $\overline{\zeta}_{2}$
coincide at $-\mu$ and this is the transitional point at which the
two local minima are equal and the solution becomes nonsmooth.

\bigskip{}

Proof. The proof of this theorem follows from the triality theory
developed in {[}5, 8{]}. $\quad\square$''

\medskip{}

The references {[}5, 8{]} above are our references \cite{Gao:98}
and \cite{Gaoo-book}, respectively.

\medskip{}

Before discussing the above result let us clarify the meaning for
$\overline{\zeta}_{i}$ and $\overline{g}_{i}$ (as well as $\overline{\zeta}$
and $\overline{g}$) appearing in the above statement. In fact these
functions are defined in the statement of \cite[Th.~2]{Gao/Ogden:08}:

\medskip{}

{}``\textbf{Theorem 2 (Closed-form Solutions).} For a given shear
stress $\tau_{\theta}>0$ such that $\sigma=b^{2}\tau_{\theta}/r^{2}$,
the dual algebraic equation (43) has at most three real roots $\overline{\zeta}_{i}(r)$,
$i=1,2,3$, ordered as

$\overline{\zeta}_{1}(r)\geq-\mu\geq\overline{\zeta}_{2}(r)\geq\overline{\zeta}_{3}(r)\geq-\tfrac{1}{2}\nu\alpha^{2}$.
$\quad$(49)

\noindent For each of these roots, the function defined by

$\overline{g}_{i}(r)=\int_{a}^{r}\frac{b^{2}\tau_{\theta}/s^{2}+\alpha\overline{\zeta}_{i}(s)}{s\left(\overline{\zeta}_{i}(s)+\mu\right)}ds\quad$
(50)

\noindent is a solution of the boundary value problem (BVP), and

$\mathcal{P}(\overline{g}_{i})=\mathcal{P}^{d}(\overline{\zeta}_{i}),~~i=1,2,3$.''

\medskip{}

With our reformulation of the problem \cite[(20)]{Gao/Ogden:08} in
place, in the statements of \cite[Th.~2, Th.\ 3]{Gao/Ogden:08} one
must replace $\mathcal{X}_{a}$ by $\mathcal{L}^{4}$, $\overline{g}_{i}$
by $\overline{v}_{i}:=\frac{\sigma+\alpha\overline{\zeta}_{i}}{r(\overline{\zeta}_{i}+\mu)},$
$\overline{g}$ by $\overline{v}:=\frac{\sigma+\alpha\overline{\zeta}}{r(\overline{\zeta}+\mu)}$
and $\mathcal{P}$ by $\widehat{\mathcal{P}}$, $\mathcal{X}_{j}$
being now a neighborhood of $\overline{v}_{j}$, for $j=2,3$. This
is possible since the operator $v\in\mathcal{L}^{4}\rightarrow u=\int_{a}^{r}v\in\mathcal{X}_{a}$
and its inverse $\mathcal{X}_{a}\ni u\rightarrow v=u_{x}\in\mathcal{L}^{4}$
are linear continuous under the $W^{1,4}$ topology on $\mathcal{X}_{a}$;
whence $g\in\mathcal{X}_{a}$ is a local extrema for $\mathcal{P}$
iff the corresponding $v\in\mathcal{L}^{4}$ is a local extrema for
$\widehat{\mathcal{P}}$.

Hence $\overline{\zeta}$, $\overline{\zeta}_{i}$ and $\overline{v},$
$\overline{v}_{i}$ are exactly as in our Proposition \ref{prop2}.
We observe that there are differences between the conclusions of Proposition
\ref{prop2} and \cite[Th.\ 4]{Gao/Ogden:08}, the conclusions in
\cite[Th.\ 4]{Gao/Ogden:08} being stronger. In the discussion below
we show that it is not possible to obtain stronger conclusions than
those of Proposition \ref{prop2}.

\medskip{}

\emph{Discussion of \cite[(51)]{Gao/Ogden:08}.} The sole difference
between \cite[(51)]{Gao/Ogden:08} and (\ref{r-caz-a}) is that one
has $\max_{\zeta\in\mathcal{S}_{a}}\mathcal{P}^{d}(\zeta)$ instead
of $\max_{\zeta\in A_{1},\zeta>-\mu}\mathcal{P}^{d}(\zeta)$. As seen
Section \ref{sec2}, $\mathcal{P}^{d}(\zeta)\in\mathbb{R}$ only for
$\zeta\in A_{1}$, so considering $\sup_{\zeta\in\mathcal{S}_{a}}\mathcal{P}^{d}(\zeta)$
makes no sense. One can ask if \cite[(51)]{Gao/Ogden:08} holds when
one replaces $\max_{\zeta\in\mathcal{S}_{a}}\mathcal{P}^{d}(\zeta)$
by $\max_{\zeta\in A_{1}^{0}}\mathcal{P}^{d}(\zeta)$, where \[
A_{1}^{0}:=\{\zeta\in A_{1}\mid\zeta\geq-\tfrac{1}{2}\nu\alpha^{2}\}.\]
 That is not true. Indeed, consider $\zeta_{n}(r)=-\mu-\gamma(r-a)$
for $r\in[a+(b-a)/n,b]$ and $\zeta_{n}(r)=-\mu-\gamma(b-a)/n$ for
$r\in[a,a+(b-a)/n)$, where $0<\gamma<\left(\tfrac{1}{2}\nu\alpha^{2}-\mu\right)/(b-a)$.
Clearly $-\mu-\gamma(b-a)/n\geq\zeta_{n}\geq-\mu-\gamma(b-a)>-\tfrac{1}{2}\nu\alpha^{2}$
on $[a,b]$, and so $\zeta_{n}\in A_{1}^{0}$. Moreover \[
-\int_{a}^{b}\frac{\beta^{2}}{\zeta_{n}+\mu}dr\geq\int_{a+(b-a)/n}^{b}\frac{\beta^{2}(r)}{\gamma(r-a)}dr\geq\frac{\eta}{\gamma}\ln n\rightarrow\infty,\]
 which proves that $\sup_{\zeta\in A_{1}^{0}}\mathcal{P}^{d}(\zeta)=+\infty$
(see (\ref{pd-first})).

\medskip{}

\emph{Discussion of \cite[(52)]{Gao/Ogden:08}.} A similar discussion
as above shows that $\max_{\zeta>-\mu}\mathcal{P}^{d}(\zeta)$ in
\cite[(52)]{Gao/Ogden:08} does not make sense. The correct equality
has been established in (\ref{r-caz-b1}).

\medskip{}

\emph{Discussion of \cite[(53)]{Gao/Ogden:08}.} Assume that $0<\beta^{2}<\eta$
on $[a,b]$. It is easy to show that $\{\zeta\in\mathcal{L}^{2}\mid\rho<\zeta<-\mu\}\not\subset A_{1}$,
which proves that $\{\zeta\in\mathcal{L}^{2}\mid\overline{\zeta}_{3}<\zeta<-\mu\}\not\subset A_{1}$;
take for example $\zeta(r)=-\mu+\frac{\rho+\mu}{\eta}(r-a)\beta^{2}(r)$,
$r\in(a,b)$. This shows that $\min_{\overline{\zeta}_{3}<\zeta<-\mu}\mathcal{P}^{d}(\zeta)$
in \cite[(53)]{Gao/Ogden:08} does not make sense. Therefore one has
to replace the set $\{\zeta\in\mathcal{L}^{2}\mid\overline{\zeta}_{3}<\zeta<-\mu\}$
by $\{\zeta\in A_{1}\mid\overline{\zeta}_{3}<\zeta<-\mu\}$, that
is, as in (\ref{r-caz-b2}).

The problem that occurs now is whether $\overline{v}_{2}$ is a local
minimum point of $\widehat{\mathcal{P}}$. The answer for this problem
is negative. Indeed, consider $\varepsilon\in(0,b-a)$ and take $v_{\varepsilon}(r):=\overline{v}_{1}(r)$
for $r\in[a,a+\varepsilon]$, $v_{\varepsilon}(r):=\overline{v}_{2}(r)$
for $r\in(a+\varepsilon,b]$. From Corollary \ref{cor1} we have that
$p_{-\beta(r)}(rv_{\varepsilon}(r))<p_{-\beta(r)}(r\overline{v}_{2}(r))$
for every $r\in[a,a+\varepsilon]$, and so \[
\widehat{\mathcal{P}}(\overline{v}_{2})-\widehat{\mathcal{P}}(v_{\varepsilon})=2\pi\int_{a}^{a+\varepsilon}r\left[p_{-\beta(r)}(r\overline{v}_{2}(r))-p_{-\beta(r)}(rv_{\varepsilon}(r))\right]dr>0.\]
 Since $\left\Vert v_{\varepsilon}-\overline{v}_{2}\right\Vert =\big(\int_{a}^{a+\varepsilon}\left\vert \overline{v}_{2}-\overline{v}_{1}\right\vert ^{4}\big)^{1/4}\leq\varepsilon^{1/4}\left\Vert \overline{v}_{2}-\overline{v}_{1}\right\Vert ,$
it is clear that $\overline{v}_{2}$ is not a local minimum point
of $\widehat{\mathcal{P}}.$

\medskip{}

\emph{Discussion of \cite[(54)]{Gao/Ogden:08}.} Clearly, $\{\zeta\in\mathcal{L}^{2}\mid-\tfrac{1}{2}\nu\alpha^{2}<\zeta<\overline{\zeta}_{2}\}\subset A_{1};$
hence the last equality in \cite[(54)]{Gao/Ogden:08} holds due to
Proposition \ref{prop2} (b) and relation (\ref{r-caz-b3}). As above,
now the problem is whether $\overline{v}_{3}$ is a local maximum
point of $\widehat{\mathcal{P}}$. The answer is negative. Indeed,
as seen in Proposition \ref{prop2}, $\overline{\zeta}_{3}\in C[a,b]$,
$\overline{\zeta}_{3}<\rho<-\mu$ on $[a,b]$ so $\overline{v}_{3}\in C[a,b]$.
It follows that the mapping $q:[a,b]\rightarrow\mathbb{R}$ defined
by $q(r):=p_{-\beta(r)}(r\overline{v}_{3}(r))$ is continuous, and
so $M:=\max_{[a,b]}q\in\mathbb{R}$. Set $m:=\max_{[a,b]}\left\vert \beta\right\vert >0$.
It follows that \begin{align*}
p_{-\beta(r)}(y) & =\tfrac{1}{2}\mu y^{2}+\tfrac{1}{2}\nu\left(\tfrac{1}{2}y^{2}-\alpha y\right)^{2}-\alpha\mu y+\beta(r)y\\
 & \geq\tfrac{1}{2}\mu y^{2}+\tfrac{1}{2}\nu\left(\tfrac{1}{2}y^{2}-\alpha y\right)^{2}-\alpha\mu y-my=p_{m}(y)\end{align*}
 for all $r\in[a,b]$ and $y\geq0$. Since $\lim_{y\rightarrow\infty}p_{m}(y)=\infty$,
there exists $y_{0}>0$ such that $p_{m}(y)>M$ for all $y\geq y_{0}$.
Let us take $\overline{y}>0$ such that $a\overline{y}\geq y_{0}$
(and so $r\overline{y}\geq y_{0}$ for every $r\in[a,b]$). Consider
$\varepsilon\in(0,b-a)$ and $v_{\varepsilon}(r):=\overline{y}$ for
$r\in[a,a+\varepsilon]$, $v_{\varepsilon}(r):=\overline{v}_{3}(r)$
for $r\in(a+\varepsilon,b]$. We have that $p_{-\beta(r)}(r\overline{v}_{3}(r))\leq M<p_{m}(r\overline{y})\leq p_{-\beta(r)}(r\overline{y})$
for every $r\in[a,a+\varepsilon]$, and so \[
\widehat{\mathcal{P}}(\overline{v}_{3})-\widehat{\mathcal{P}}(v_{\varepsilon})=2\pi\int_{a}^{a+\varepsilon}r\left[p_{-\beta(r)}(r\overline{v}_{3}(r))-p_{-\beta(r)}(r\overline{y})\right]dr<0.\]
 Since $\left\Vert v_{\varepsilon}-\overline{v}_{3}\right\Vert =\big(\int_{a}^{a+\varepsilon}\left\vert \overline{v}_{3}-\overline{y}\right\vert ^{4}\big)^{1/4}\leq\varepsilon^{1/4}\left\Vert \overline{v}_{3}-\overline{y}\right\Vert $,
this proves that $\overline{v}_{3}$ is not a local maximum of $\widehat{\mathcal{P}}.$

\section{Conclusions}
\begin{itemize}
\item The function $\mathcal{P}^{d}$ is not well defined on the set $\mathcal{S}_{a}.$
The biggest set on which $\mathcal{P}^{d}$ is well defined is $A_{1}$
whose core is empty; hence it is not possible to speak about critical
points using the G\^{a}teaux differential.
\item Since $\mathcal{P}^{d}$ is not well defined on the sets $\mathcal{S}_{a},$
$\{\zeta\in\mathcal{L}^{2}\mid\zeta>-\mu\}$ and $\{\zeta\in\mathcal{L}^{2}\mid\overline{\zeta}_{3}<\zeta<-\mu\}$
the quantities $\max_{\zeta\in\mathcal{S}_{a}}\mathcal{P}^{d}(\zeta),$
$\max_{\zeta>-\mu}\mathcal{P}^{d}(\zeta)$ and $\min_{\overline{\zeta}_{3}<\zeta<-\mu}\mathcal{P}^{d}(\zeta)$
in \cite[Th.\ 4]{Gao/Ogden:08} make no sense.
\item The element $\overline{g}_{2}$ is not a local minimizer of $\mathcal{P}$
and $\overline{g}_{3}$ is not a local maximizer of $\mathcal{P}.$
\item As seen above, one says that the proof of \cite[Th.\ 4]{Gao/Ogden:08}
{}``follows from the triality theory'', without mentioning a precise
result. In fact, as seen in Section \ref{sec2}, the proof of the
correct variant of \cite[Th.\ 4]{Gao/Ogden:08}, that is, Proposition
\ref{prop2}, follows from an elementary result, while \cite[Th.\ 4]{Gao/Ogden:08}
is obtained by a simple extrapolation of Proposition \ref{prop1}. \end{itemize}

\end{document}